\DeclareSymbolFont{cyrletters}{OT2}{wncyr}{m}{n}
\DeclareMathSymbol{\Sha}{\mathalpha}{cyrletters}{"58}
\newcommand{\qr}[2]{\left(\frac{#1}{#2}\right)}
\newcommand{\matzz}[4]{\left(
\begin{array}{cc} #1 & #2 \\ #3 & #4 \end{array} \right)}
\theoremstyle{plain}
\newtheorem{thm}{Theorem}
\newtheorem{lem}[thm]{Lemma}
\newtheorem{cor}[thm]{Corollary}
\newtheorem{prop}[thm]{Proposition}
\newtheorem{defn}[thm]{Definition}
\theoremstyle{definition}
\newtheorem{ex}[thm]{Example}
\newtheorem{rmk}[thm]{Remark}
\numberwithin{thm}{section}
\theoremstyle{plain}
\newtheorem{thmABC}{Theorem}
\newtheorem{propABC}[thmABC]{Proposition}
\newtheorem{defnABC}[thmABC]{Definition}
\renewcommand{\tilde}{\widetilde}
\newcommand{\ga}[2]{\begin{gather}\label{#1}#2 \end{gather}}
\newcommand{\surj}{\twoheadrightarrow}
\newcommand{\inj}{\hookrightarrow}
\newcommand{\Hom}{{\rm Hom}}
\newcommand{\im}{{\rm im}}
\newcommand{\Spec}{{\rm Spec \,}}
\newcommand{\Gal}{{\rm Gal}}
\newcommand{\C}{{\mathbb C}}
\newcommand{\F}{{\mathbb F}}
\renewcommand{\P}{{\mathbb P}}
\newcommand{\Q}{{\mathbb Q}}
\newcommand{\I}{{\mathbb I}}
\newcommand{\rM}{{\mathrm{M}}}
\newcommand{\rH}{{\mathrm{H}}}
\newcommand{\rV}{{\mathrm{V}}}
\newcommand{\rT}{{\mathrm{T}}}
\newcommand{\Z}{{\mathbb Z}}
\newcommand{\res}{{\text{\sf res}\hspace{.1ex} }}
\DeclareMathOperator{\GL}{GL}
\DeclareMathOperator{\PGL}{PGL}
\DeclareMathOperator{\SL}{SL}
\DeclareMathOperator{\Aut}{Aut}
\DeclareMathOperator{\Out}{Out}
\DeclareMathOperator{\End}{End}
\DeclareMathOperator{\tr}{tr}
\newcommand{\pr}{\mathrm{pr}}
\newcommand{\one}{\mathbf{1}}
\newcommand{\ph}{\varphi}
\newcommand{\ab}{{\mathrm{ab}}}
\newcommand{\RoqFp}{C_{\F_p}}
\newcommand{\Roq}{C_{\bar \F_p}}
\newcommand{\RoqFpp}{C_{\F_{p^2}}}
\numberwithin{equation}{section}
\definecolor{shadecolor}{RGB}{186,238,186}
\definecolor{softlimegreen}{RGB}{186,238,186}
\definecolor{limegreen}{RGB}{208,243,208}
\definecolor{questioncolor}{RGB}{135, 173, 241} 
\definecolor{warningcolor}{RGB}{240,120,134}
\definecolor{verypaleyellow}{RGB}{255,255,194}
\begin{document}

\title[ Obstruction]{An obstruction to lifting to characteristic $0$  }
\author{H\'el\`ene Esnault, Vasudevan Srinivas \and  Jakob Stix  }
\address{Freie Universit\"at Berlin, Arnimallee 3, 14195, Berlin,  Germany}
\email{esnault@math.fu-berlin.de}
\address{   TIFR, School of Mathematics \\
Homi Bhabha Road \\
400005 Mumbai, India}
\email{ srinivas@math.tifr.res.in}
\address{  Goethe-Universit\"at, Robert-Mayer-Str. 6--10, 60325 Frankfurt, Germany }
\email{ stix@math.uni-frankfurt.de}
\thanks{ The second author (VS) was supported during part of the preparation of the article by a J. C. Bose Fellowship of the Department of Science and Technology, India. He also acknowledges support of the Department of Atomic Energy, India under project number RTI4001. The third author (JS) acknowledges support by Deutsche  Forschungsgemeinschaft  (DFG) through the Collaborative Research Centre TRR 326 "Geometry and Arithmetic of Uniformized Structures", project number 444845124.
}
\subjclass{14F35, 11S15}

\begin{abstract}  
We introduce a new obstruction to lifting  smooth proper varieties  in characteristic $p>0$  to characteristic $0$. It is based on Grothendieck's specialization homomorphism  and  the resulting  discrete finiteness properties of \'etale fundamental groups.
\end{abstract}

\maketitle

\section{Introduction} 

\subsection{The first example  and recent developments}
Let $A$ be a complete local noetherian domain with algebraically closed residue field $k$ and field of fractions $A \subset K$. In \cite{Ser61}, Serre considers for a smooth 
proper variety $X$ over $k$ the question whether $X$ lifts to a smooth proper $X_A$ over $\Spec(A)$ for some $A$ as above. To construct the first examples of varieties in characteristic $p$ that do not lift to characteristic $0$, he assumes that 
$X$ admits a finite Galois \'etale cover $Y \to X$ by a complete intersection $Y \inj \P^n$ of $\dim(Y) \geq 3$ such that the action of the Galois group $G$ extends to a linear action on projective space. 
It is then proven in \cite[Lemma]{Ser61}, relying on Grothendieck's isomorphism 
\begin{equation}
\label{eq:GrothendieckIso}
\pi_1(X)\xrightarrow{\cong} \pi_1(X_A)
\end{equation}
between the \'etale fundamental groups  as defined in \cite{SGA1}  and denoted by $\pi_1$ in this note,
 that a lift $X_A$ implies a lift of the linear $G$-action to $\rho_A: G\to \PGL_{n+1}(A)$. If $k$ has characteristic $p>0$ and $G$ has a `large' $p$-Sylow subgroup, then  the deformation $\rho_A$ cannot exist and the variety $X$ does not lift. 
 
Serre's pioneer examples and methods have been largely amplified since then. For example, van Dobben de Bruyn proved  in \cite[Thm.~2]{vDdB21} that if $X$ lifts to characteristic $0$ and is endowed with a morphism $X\to C$ where $C$ is a smooth projective curve of genus $\ge 2$, then the morphism itself lifts to characteristic $0$ after an  inseparable base change over $C$. This enabled him to find examples of smooth projective varieties $X$ such that no alteration of $X$ lifts to characteristic $0$, see  \cite[Thm.~1]{vDdB21}.

\subsection{The new obstruction}
In this note we construct a new obstruction to the existence of a lift to characteristic $0$. Let $\overline{K}$ be an algebraic closure of $K$, the field of fractions of $A$ as above, and let $X_{\overline{K} }$ be the corresponding geometric generic fibre of the deformation $X_A$.
Recall that Grothendieck's isomorphism \eqref{eq:GrothendieckIso}  
is the key  point  to define Grothendieck's specialization  homomorphism 
\[
{\rm sp}\colon \pi_1(X_{\overline{K} })  \to \pi_1(X)
\]
which is surjective and an isomorphism on the pro-$p'$-completion, see \cite[Exp.~XIII 2.10, Cor.2.12]{SGA1}.  
On the other hand, 
if $\bar \eta: \Spec(\C)\to \Spec(K) \to \Spec(A)$  is a complex generic point and $X_{\bar \eta} = X_A \times_{\Spec(A),\bar \eta} \Spec(\C)$, by the Riemann Existence Theorem \cite[Exp.~XII Thm.~5.1]{SGA1} the \'etale fundamental group  $\pi_1(X_{\bar \eta})$ is the profinite completion of the topological fundamental group $\pi_1^{\rm top}(X_{\bar \eta}(\C))$ and the base change homomorphism $\pi_1(X_{\bar \eta})\to \pi_1(X_{\overline{K}})$ is an isomorphism.   As $X_{\bar \eta}(\C)$ is homotopy equivalent to a finite $CW$-complex (e.g. Morse theory), the discrete group $\Gamma=\pi_1^{\rm top}(X_{\bar \eta}(\C))$ is finitely presented as a discrete group. 
Thus those data yield a finitely presented group $\Gamma$ together with a group homomorphism 
\[
\Gamma\to \pi_1(X)
\]
which is  surjective on the profinite completion and an isomorphism on the pro-$p'$-completion. In addition, those properties propagate naturally for any finite  \'etale cover $X_U \to X$ associated to a finite index open subgroup $U \subseteq  \pi_1(X)$. 

\smallskip

This suggests the following definition.

\begin{defnABC}[see Definition~\ref{defn:pprime}] \label{defnABC:pprime}
A profinite group $\pi$ is said to be  \textbf{$p'$-discretely finitely generated} (resp.\ \textbf{$p'$-discretely finitely presented})   if there is a finitely generated (resp.\ presented)  discrete group $\Gamma$ together with a group homomorphism 
\[
\gamma: \Gamma \to \pi
\]
such that
\begin{enumerate}[label=(\roman*)]
\item the profinite completion 
$\hat \gamma: \hat \Gamma \to \pi$ is surjective;
\item
for any open subgroup $U\subset \pi$ 
with $\Gamma_U := \gamma^{-1}(U)$ 
the restriction 
$\gamma_U: \Gamma_U \to U$ induces a continuous group isomorphism on pro-$p'$-completions 
\[
\gamma_U^{(p')}: \Gamma_U^{(p')}\to U^{(p')}.
\]
\end{enumerate}
\end{defnABC}

Thus Grothendieck's theory of specialization for fundamental groups implies the following.

\begin{propABC}[see Proposition~\ref{prop:crit}] \label{prop:liftingobstruction}
Let $X$ be a smooth proper scheme defined over an algebraically closed field $k$ of characteristic $p$. If 
$\pi_1(X)$ is not $p'$-finitely presented, for example if  $\pi_1(X)$ is not even $p'$-finitely generated, then $X$ is not liftable to characteristic $0$.
\end{propABC}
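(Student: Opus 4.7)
The plan is to prove the contrapositive: assuming $X$ lifts to a smooth proper $X_A$ over $\Spec(A)$ for $A$ a complete local noetherian domain with residue field $k$ and fraction field $K$, I will construct data $\gamma\colon \Gamma \to \pi_1(X)$ verifying the two conditions of Definition~\ref{defnABC:pprime}. Fix a field embedding $K \inj \C$, yielding a complex generic point $\bar\eta\colon \Spec(\C) \to \Spec(A)$ and the smooth proper complex variety $X_{\bar\eta}$. Set $\Gamma := \pi_1^{\rm top}(X_{\bar\eta}(\C))$. Because $X_{\bar\eta}(\C)$ is a compact complex manifold, by Morse theory it has the homotopy type of a finite $CW$-complex, so $\Gamma$ is finitely presented.

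Define $\gamma\colon \Gamma \to \pi_1(X)$ as the composite
\[
\Gamma \longrightarrow \hat\Gamma \xrightarrow{\ \cong \ } \pi_1(X_{\bar\eta}) \xrightarrow{\ \cong \ } \pi_1(X_{\overline K}) \xrightarrow{\ {\rm sp}\ } \pi_1(X_A) \xrightarrow{\ \cong \ } \pi_1(X),
\]
where the first isomorphism is the Riemann Existence Theorem, the second is the invariance of étale $\pi_1$ under algebraically closed extensions of the base for proper schemes, the third is Grothendieck's specialization, and the last is \eqref{eq:GrothendieckIso}. Condition (i) then follows immediately from the surjectivity of ${\rm sp}$.

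For condition (ii), let $U \subset \pi_1(X)$ be an open subgroup. Via \eqref{eq:GrothendieckIso}, $U$ corresponds to a connected finite étale cover $X_{U,A} \to X_A$ whose special fibre $X_U \to X$ realises $U$ as $\pi_1(X_U)$. Pulling back along $\bar\eta$ gives the finite étale cover $X_{U,\bar\eta} \to X_{\bar\eta}$, and its topological fundamental group is canonically identified with $\Gamma_U = \gamma^{-1}(U)$ by the Galois correspondence for $\Gamma$ combined with Riemann existence. Thus the composite $\gamma_U\colon \Gamma_U \to U$ equals the analogously defined map attached to the lift $X_{U,A}$ of $X_U$. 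Applying Grothendieck's specialization theorem to $X_U$, which asserts that ${\rm sp}\colon \pi_1(X_{U,\overline K}) \to \pi_1(X_U)$ is an isomorphism on pro-$p'$-completions, the map $\gamma_U$ becomes an isomorphism on pro-$p'$-completions, as required.

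The only real subtlety, which I would treat as the main obstacle, is the bookkeeping in passing to arbitrary open subgroups $U$: one must check that the formation of $\gamma^{-1}(U)$ on the discrete side is compatible with the three changes of category (topological to étale via Riemann existence, complex étale to algebraic étale via base change, generic to special via specialization). Once this compatibility is verified functorially, condition (ii) reduces to the pro-$p'$ statement of Grothendieck's specialization applied to the cover $X_U$, which itself lifts to $X_{U,A}$ and so falls under the same hypotheses as $X$. Hence $\pi_1(X)$ is $p'$-discretely finitely presented, proving the contrapositive.
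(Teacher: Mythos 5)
Your proposal is correct and follows essentially the same route as the paper's own proof (which is delegated to Example~\ref{ex:ex}): compose the topological-to-étale comparison isomorphism with Grothendieck's specialization to build $\gamma\colon\Gamma\to\pi_1(X)$, then verify condition (ii) by lifting each finite étale cover $X_U\to X$ to $X_{U,A}\to X_A$ and applying specialization there. The only cosmetic differences are that the paper first reduces from a general complete local noetherian $A$ to a complete DVR $V$ before invoking a complex point, and it lifts the cover by an explicit citation of \cite[Thm.~18.1.2]{EGAIV(4)} rather than via the identification $\pi_1(X)\cong\pi_1(X_A)$; also the paper treats the more general tame $\pi_1^t$ of a good compactification, which your argument omits but which the proposition does not require.
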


This is the announced obstruction to lifting based on discrete finiteness properties of the \'etale fundamental group. This notion is very flexible, for example, there are many possible $A$ and many possible complex generic points of ${\rm Spec}(A)$, thus many possible $\Gamma$ and homomorphisms $\Gamma\to \pi_1(X)$, see Remark~\ref{rmk:nonhomeomorphic}.  However, we prove that our obstruction indeed prevents certain varieties from lifting.

\begin{thmABC}[Main result, see Theorem~\ref{thm:ex} and Corollary~\ref{cor:nonlift}] \label{thm:main}
Let $k$ be an algebraically closed field of characteristic $p > 0$. 
Then there are smooth projective varieties $X$ over $k$ 
such that $\pi_1(X)$ is not even $p'$-discretely finitely generated. Thus, in particular, $X$ does not lift to charactaristic $0$. 
\end{thmABC}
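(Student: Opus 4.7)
I would proceed in two stages: construct a candidate $X$, then verify that $\pi_1(X)$ is not $p'$-discretely finitely generated. Once the second stage is complete, Proposition~\ref{prop:liftingobstruction} yields the non-liftability.

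\textbf{Ruling out a naive generator-count obstruction.} Suppose $\gamma : \Gamma \to \pi_1(X)$ exhibits $p'$-discrete finite generation with $d = d(\Gamma)$ discrete generators. For every open subgroup $U \subseteq \pi_1(X)$ of index $n$, Nielsen--Schreier gives $d(\Gamma_U) \leq 1 + n(d-1)$, and the isomorphism $\Gamma_U^{(p')} \cong U^{(p')}$ from (ii) then forces $d(U^{(p')}) \leq 1 + n(d-1)$. Since $\pi_1(X)$ is itself topologically finitely generated for any smooth projective $X$, the parallel profinite Schreier bound $d(U^{(p')}) \leq d(U) \leq 1 + n(d(\pi_1(X))-1)$ already holds independently of $\Gamma$. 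In particular, the ratio $d(U^{(p')})/n$ is automatically bounded for any smooth projective $X$, so no individual open subgroup can produce the required contradiction via generator counts alone. The obstruction must be structural, exploiting the whole coherent family $\{U^{(p')}\}_U$ simultaneously.

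\textbf{Construction and verification.} The plan is to exhibit a smooth projective variety $X$ for which the family of pro-$p'$-completions $\{U^{(p')}\}_{U \subseteq \pi_1(X) \text{ open}}$ cannot be realized simultaneously as $\{\Gamma_U^{(p')}\}_U$ for any single finitely generated discrete $\Gamma$. A natural template is to start from a Serre-style non-liftable quotient $X_0 = Y/G$, where $Y \inj \P^n$ is a complete intersection of dimension $\geq 3$ with a free linear action of a finite group $G$ having a large $p$-Sylow (as in \cite{Ser61}), and to enrich this example — by fibering over or taking a product with a curve of high genus, or by taking an appropriate iterated \'etale quotient — so that $\pi_1(X)$ becomes infinite and Serre's $p$-Sylow obstruction propagates into a cofinal family of open subgroups. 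The verification would then proceed by contradiction: a putative $\Gamma$ with its inverse images $\Gamma_U$ would, via the isomorphisms $\Gamma_U^{(p')} \cong U^{(p')}$, produce a compatible discrete lift of the $G$-action attached to every open subgroup in this cofinal family, which is forbidden at each level by Serre's classical obstruction.

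\textbf{Main obstacle.} The technical heart is engineering the smooth projective $X$ so that Serre's finite-group pathology embeds \emph{coherently} into a full cofinal family of open subgroups of $\pi_1(X)$, and then translating the ``simultaneous discrete lift'' requirement of condition (ii) of Definition~\ref{defnABC:pprime} into a workable algebraic obstruction. Growth-rate arguments alone cannot produce the contradiction (by the observation above), so the argument must instead leverage the interplay between the profinite structure of $\pi_1(X)$ and the residual finiteness properties of any candidate discrete $\Gamma$. This coherent propagation of a characteristic-$p$ pathology across the subgroup lattice of $\pi_1(X)$, and its transcription into a rigorous obstruction, is where I expect the main novelty of the proof to lie.
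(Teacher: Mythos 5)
Your proposal identifies correct starting structure (a Serre--Weil quotient $X = Y/G$), but a genuine gap remains: you recognize that the obstruction ``must be structural,'' yet you leave the actual mechanism undetermined and propose to search for it via a ``coherent propagation'' of Serre's $\PGL_{n+1}$-deformation obstruction across a cofinal family of open subgroups. This is not how the paper's argument works, and I do not see how to make it work. Serre's obstruction is about deforming a \emph{linear} action of $G$, not about the abstract group $\pi_1(X)$; condition (ii) of Definition~\ref{defnABC:pprime} gives you control only over pro-$p'$-completions of preimages $\Gamma_U$, and there is no evident way to extract from those a compatible family of linear $G$-lifts in the sense Serre needs. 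So the ``translation'' step you flag as the main obstacle is precisely what your proposal does not supply.

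The key idea in the paper is quite different and, once seen, lives at a \emph{single} open subgroup, not a cofinal family. If $\gamma\colon\Gamma\to\pi_1(X)$ witnessed $p'$-discrete finite generation, then for a single finite quotient $\ph\colon\pi_1(X)\surj G$ with kernel $U_\ph$, condition (ii) applied to $U=U_\ph$ forces
\[
\Gamma_\ph^{\ab}\otimes_{\Z}\Q_\ell \;\cong\; U_\ph^{\ab}\otimes_{\hat\Z}\Q_\ell
\]
as $G$-representations for all $\ell\ne p$. The left side is visibly the scalar extension of a representation on the $\Q$-vector space $\Gamma_\ph^{\ab}\otimes\Q$. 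So the $G$-representation on $U_\ph^{\ab}\otimes\Q_\ell$ must be \emph{defined over} $\Q$. Now take $X=(C\times_k P)/G$ with $C$ a curve of genus $\ge 2$, $G=\Aut(C)$, and $P$ simply connected with free $G$-action; then $U_\ph=\pi_1(C)$ and the representation in question is the dual of $H^1(C,\Q_\ell)$, i.e.\ the Tate module of the Jacobian. The paper shows that if this representation is absolutely irreducible for all $\ell\ne p$ (condition $(\star)$, verified for the Roquette curve $y^2 = x^p - x$), then $C$ is supersingular, the Jacobian is isogenous to $E^g$ for a supersingular $E$, and the relevant Wedderburn factor of $\Q[G]$ is $\rM_g(D)$ with $D$ the quaternion algebra over $\Q$ ramified at $\{p,\infty\}$; consequently the character is $\Z$-valued but the Schur index is $2$, so the representation is \emph{not} definable over $\Q$. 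That is the contradiction, and it has nothing to do with deforming projective linear actions. Your ``ruling out generator counts'' observation is correct and shows you are alert to the right issues, but the proposal lacks the actual obstruction: rationality of the $\Aut(C)$-representation on $H^1(C)$, controlled by supersingularity and the Schur index of a quaternion algebra.
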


As the properties of Grothendieck's specialization homomorphism hold also for smooth quasi-projective varieties over $A$ with a good compactification with a relative normal crossings divisor  at infinity with values in the tame \'etale fundamental group, we can apply the notion in this case as well. 

Let us remark at this point that the main theorem of \cite[Thm.~1.1]{ESS21} asserts that, as a profinite group, $\pi_1(X)$ where $X$ is smooth projective, and more generally $\pi_1^t(X)$ when $X$ is smooth quasi-projective and admits a good compactification, is a finitely presented profinite group. Thus Theorem~\ref{thm:main} shows as well  that in general there is no finitely presented discrete  group which can explain  the main result of {\it loc.\ cit.}

\subsection{Outline}
We now describe our method to prove Theorem~\ref{thm:main}. 
Over $k=\bar \F_p$,  let $C$ be a smooth projective curve of genus $g \geq 2$ with $G=\Aut(C)$ its finite group of automorphisms. Let $P$ be a simply connected variety on which $G$ acts freely. We define $X=(C\times_kP)/G$ where $G$ acts diagonally. Then $G$ is a finite quotient of $\pi_1(X)$ and the associated Galois cover $C\times_kP$ has fundamental group equal to $\pi_1(C)$.  If  $\pi_1(X)$ was $p'$-discretely finitely generated by some $\Gamma \to \pi_1(X)$, then for any prime number $\ell\neq p$ 
the action $\rho_\ell$ of $G$ on $\ell$-adic cohomology $H^1(C, \Q_\ell)$ would be defined over $\Q$,
see Proposition~\ref{prop:discretelyfgimpliesSchur}.
 {\it We construct a curve $C$ for which this rationality propriety fails.}
 
The representation  $\rho_\ell$ is faithful, see 
Proposition~\ref{prop:AUTfaithfulOnH1}. It turns out that the rationaliity property fails if for all $\ell\neq p$, the representation $\rho_\ell$ is absolutely irreducible, see Section~\ref{sec:curvesWithSchur}. Indeed, the absolute irreducibility  implies that $C$ is supersingular, see  Proposition~\ref{prop:ss},  and  by Proposition~\ref{prop:Schur} that for $\ell\neq p$  the character of $\rho_\ell$  is $\Z$-valued, but the Schur index of $\rho_\ell$ is $2$. This prevents $\rho_\ell$ to be defined over $\Q$.  It remains then to construct such a curve. We show that the Roquette curve defined in Section~\ref{sec:Roquettecurve} has the required property. For this we have to make explicit the structure of its group of automorphism, see Appendix~\ref{app:A}.

\bigskip

{\it Acknowledgements}:  We thank Kivan\c{c} Ersoy who showed us examples of finitely presented groups the image of which in the profinite completion is not finitely presented. This in particular gives one reason  why  in our Definition~\ref{defn:pprime}  of a $p'$-discretely  finitely presented group, we can definitely not assume $\Gamma\to \pi$ to be injective. 

\section{profinite groups with \texorpdfstring{$p'$}{p'}-approximation}

\subsection{Finiteness properties} Let $p$ be a prime number.  For any group $H$, the pro-$p'$-completion of $H$ is defined as 
\ga{}{ H^{(p')}:=\varprojlim_{H\surj Q} Q,   \notag}
where $H \surj Q$ ranges through all finite quotients  with order $|Q|$ coprime to $p$.  
In case $H$ is already a profinite group, then we only consider continuous quotients $H \surj Q$, i.e., with open kernel. 
If $\alpha: H_1\to H_2$ is a group homomorphism (continuous if the $H_i$ are profinite), we denote the induced continuous homomorphism between the pro-$p'$-completions
by 
\ga{}{ \alpha^{(p')}: H_1^{(p')}\to H_2^{(p')} \notag}

\begin{rmk}
Let $\Gamma$ be a discrete group. Recall that any presentation of $\Gamma = \langle S \ | \ R\rangle$ with set of generators $S$ and set of relations $R$ gives rise to a \textit{presentation complex} $X_{S,R}$ with a single $0$-cell $\ast$, a $1$-cell for each $s \in S$ and a $2$-cell for each relation in $R$, see e.g. \cite[Cor. 1.28]{hatcherAT} for a description of the attaching maps. It follows from \textit{loc.\ cit.}\ that naturally
\[
\pi_1^{\rm top}(X_{S,R},\ast) = \Gamma.
\]
The proof shows in particular that the fundamental group of a CW-complex with finitely many $1$-cells (resp.\ finite $2$-skeleton) is finitely generated (resp.\ of finite presentation).
\end{rmk}

Recall the following well known proposition, see e.g. \cite[Prop. 4.2] {LyndonSchupp} 
and \cite[Cor. 2.7.1, Cor. 2.8]{MagnusKarrassSolitar} 
for the forward direction, or \cite[p.77]{Nie21},  \cite[p.162]{Sch27} for the claim on finite generation.

\begin{prop}[Reidemeister--Schreier] 
\label{prop:fgfptower}
Let $\Gamma$ be a discrete group and let $\Gamma_\circ \subseteq \Gamma$ be a subgroup of finite index.
\begin{enumerate}
\item 
$\Gamma$ is finitely generated if and only if $\Gamma_\circ$ is finitely generated.

\item 
$\Gamma$ is finitely presented if and only if $\Gamma_\circ$ is finitely presented.
\end{enumerate}
\end{prop}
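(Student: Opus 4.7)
The plan is to handle each biconditional by treating the two directions separately. In each case the direction asserting that the finite-index subgroup $\Gamma_\circ$ inherits the finiteness property from $\Gamma$ is the substantial content, which I would extract from the covering-space interpretation of the presentation complex recalled in the preceding remark. The reverse direction is elementary.

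For that elementary direction, suppose first that $\Gamma_\circ$ is finitely generated by elements $s_1,\ldots,s_n$, and pick coset representatives $t_1=1,t_2,\ldots,t_k$ for $\Gamma_\circ\backslash\Gamma$. Each $g\in\Gamma$ admits a unique decomposition $g=h\cdot t_j$ with $h\in\Gamma_\circ$, so $\{s_i\}\cup\{t_j\}$ generates $\Gamma$. For the finite presentation statement, if $\Gamma_\circ=\langle s_1,\ldots,s_n\mid r_1,\ldots,r_m\rangle$, one adds to the $r_\ell$ the finitely many ``multiplication table'' relations expressing each $t_j s_i$ and each $t_j t_{j'}$ in the normal form $w(s)\cdot t_{j''}$ for some fixed word $w$ in the $s_i$; a routine check, reducing every word in the $s_i,t_j$ to normal form, shows that this is a presentation of $\Gamma$.

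For the harder direction I would invoke covering-space theory applied to the presentation complex. Suppose $\Gamma$ is finitely generated with a finite generating set $S$; let $X=X_{S,R}$ be the presentation complex for some (a priori infinite) relation set $R$, so that $\pi_1(X,\ast)=\Gamma$ and the $1$-skeleton $X^{(1)}$ is a finite wedge of circles. Let $p\colon\widetilde{X}\to X$ be the covering corresponding to $\Gamma_\circ\leq\pi_1(X,\ast)$; it has degree $[\Gamma:\Gamma_\circ]<\infty$, and $\pi_1(\widetilde{X})=\Gamma_\circ$. Since a covering of a CW complex is naturally a CW complex whose $n$-cells are in bijection with pairs (sheet, $n$-cell of $X$), the $1$-skeleton of $\widetilde{X}$ has only finitely many cells, so $\pi_1(\widetilde{X}^{(1)})$, and a fortiori its quotient $\Gamma_\circ=\pi_1(\widetilde{X})$, is finitely generated. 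For part (2), the same bijection shows that if in addition $R$ is finite then $\widetilde{X}^{(2)}$ is a finite $2$-complex; attaching cells of dimension $\geq 3$ does not affect $\pi_1$, so $\Gamma_\circ=\pi_1(\widetilde{X}^{(2)})$, and the finite $2$-complex $\widetilde{X}^{(2)}$ tautologically furnishes a finite presentation of $\Gamma_\circ$.

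The main obstacle is essentially bookkeeping: one must be confident that pullbacks of CW structures along finite coverings behave as stated, and that the presentation complex construction is valid for possibly infinite $R$. Both are standard once one identifies the universal cover of $X_{S,R}$ with the Cayley $2$-complex associated to the given presentation, as developed in \cite{hatcherAT}. This is precisely what is formalized combinatorially in the classical Reidemeister--Schreier rewriting procedure in the references cited in the statement; an alternative proof would run that procedure by hand, choosing a Schreier transversal and rewriting the defining relators explicitly.
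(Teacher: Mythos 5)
Your proposal is correct and follows essentially the same route as the paper: the substantial direction (that $\Gamma_\circ$ inherits finite generation, resp.\ finite presentation, from $\Gamma$) is proved by applying CW covering-space theory to a presentation complex for $\Gamma$, and the elementary converse by adjoining coset representatives together with multiplication-table relations. The only noteworthy divergence is in the converse for finite presentation: the paper first replaces $\Gamma_\circ$ by the normal core $\bigcap_{t}x_t\Gamma_\circ x_t^{-1}$ so that the auxiliary relations take the conjugation form $x_t u_i x_t^{-1}=b_{i,t}$ and the presentation is then verified by a surjectivity-plus-index argument, whereas you keep a general transversal and appeal to a normal-form rewriting; this also works, provided your ``routine check'' also eliminates $t_j^{-1}$ via the relation expressing $t_j t_{j^*}$ as an element of $\Gamma_\circ$ and uses the derived forms for $t_j s_i^{-1}$.
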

\begin{proof}
If $\Gamma$ is finitely generated (resp.\ finitely presented), then there is a presentation complex $X$ for $\Gamma$ with finitely many $1$-cells (resp.\ finite $2$-skeleton)  as the number of cells multiplies by the degree of the cover. The finite index subgroup $\Gamma_\circ$ agrees with the fundamental group of a finite covering space $Y \to X$. The complex $Y$ then also has finitely many $1$-cells (resp.\ a finite $2$-skeleton).
Thus  $\Gamma_\circ$ is also finitely generated (resp.\ finitely presented).

\smallskip

For the converse direction we assume $\Gamma_\circ$ is finitely generated by $u_1, \ldots, u_n \in \Gamma_\circ$. Then $\Gamma$ is finitely generated by the generators of $\Gamma_\circ$ and representatives $x_t$ for each coset $t \in \Gamma/\Gamma_\circ$.
Let now $\Gamma_\circ = \langle u_1, \ldots, u_n \  | \  r_1, \ldots, r_m\rangle$ be moreover finitely presented. We may assume that $\Gamma_\circ$ is normal by first passing to  $\bigcap_{t\in \Gamma/\Gamma_\circ}  x_t\Gamma_\circ x_t^{-1}$, which is also of finite index and thus finitely presented by the first part of the proof. There are $a_{s,t} \in \Gamma_\circ$ for all $s,t \in \Gamma/\Gamma_\circ$ such that 
\begin{equation}
\label{eq:rel1}
x_sx_t = a_{s,t}x_{st},
\end{equation}
and for all $t \in \Gamma/\Gamma_\circ$ and all $1 \leq i \leq n$ there are $b_{i,t} \in \Gamma_\circ$ 
\begin{equation}
\label{eq:rel2}
x_t u_i x_t^{-1} =  b_{i,t}. 
\end{equation}
We write $a_{s,t}$ and $b_{i,t}$ as words in the $u_i$. In this sense then $\Gamma$ is finitely presented by 
\[
\Gamma = \langle u_1, \ldots, u_n, x_t \ ; \ t \in \Gamma/\Gamma_\circ \ | \ r_1, \ldots, r_m,  \eqref{eq:rel1}, \eqref{eq:rel2} \rangle.
\]
Indeed, if we denote the right hand side by $\tilde\Gamma$, then there is a surjective group homomorphism $\tilde \Gamma \surj \Gamma$ because all relations of  the presentation of $\tilde \Gamma$ hold in $\Gamma$. 
Let $\tilde \Gamma_\circ$ be the subgroup of $\tilde \Gamma$ generated by the $u_i$. Then the natural map
\[
\Gamma_\circ \surj \tilde{\Gamma_\circ} \inj \tilde{\Gamma} \to \Gamma
\]
is the identity onto $\Gamma_\circ \subseteq \Gamma$. We may thus identify $\Gamma_\circ$ with $\tilde \Gamma_\circ$. 
Moreover, by \eqref{eq:rel1} and \eqref{eq:rel2} any element can be put in a form $u x_t$ with $u \in U$ and $t \in \Gamma/\Gamma_\circ$. So the index of $\Gamma_\circ = \tilde{\Gamma_\circ}$ in $\tilde{\Gamma}$ is less or equal to the index $(\Gamma:\Gamma_\circ)$. Therefore $\tilde \Gamma \to \Gamma$ is an isomorphism. 

\end{proof}

The profinite version of Proposition~\ref{prop:fgfptower} holds as well.

\begin{prop}
\label{prop:profinite_fgfptower}
Let $\pi$ be a profinite group and let $U \subseteq \pi$ be an open subgroup. Then the following holds.
\begin{enumerate}
\item 
$\pi$ is topologically finitely generated if and only if $U$ is topologically finitely generated.
\item 
$\pi$ is topologically finitely presented if and only if $U$ is topologically finitely presented.
\end{enumerate}
\end{prop}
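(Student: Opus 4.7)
The plan is to mirror the proof of Proposition~\ref{prop:fgfptower} in the profinite setting, replacing free discrete groups by free profinite groups $\hat{F}_n$ of finite rank and normal closures by their topological closures. The crucial additional ingredient is the profinite Nielsen--Schreier theorem: every open subgroup of a topologically finitely generated free profinite group is itself free profinite of finite rank, with the same Schreier rank formula as in the discrete case.

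For (1), both directions are direct. Choose a continuous surjection $\phi \colon \hat{F}_n \twoheadrightarrow \pi$ from a free profinite group of rank $n$. Then $\tilde U := \phi^{-1}(U)$ is open of index $d = [\pi:U]$ in $\hat{F}_n$, hence topologically finitely generated by profinite Nielsen--Schreier, and therefore so is $U = \phi(\tilde U)$. Conversely, if $u_1, \ldots, u_m$ topologically generate $U$ and $x_1, \ldots, x_d$ are coset representatives of $\pi/U$, their union topologically generates $\pi$.

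For (2), I first treat the forward direction. Write $\pi \cong \hat{F}_n/N$ where $N$ is the closed normal subgroup of $\hat{F}_n$ topologically generated as a normal subgroup by relators $r_1, \ldots, r_k$, and let $\tilde U \subseteq \hat{F}_n$ be the preimage of $U$. Since $N \subseteq \tilde U$, there is a short exact sequence $1 \to N \to \tilde U \to U \to 1$. By profinite Nielsen--Schreier, $\tilde U$ is topologically finitely generated free profinite. The main point is to show that $N$ is topologically finitely generated as a closed normal subgroup of $\tilde U$, generated by the finite set $\{t r_i t^{-1} : 1 \leq i \leq k,\ t \in T\}$, where $T \subseteq \hat{F}_n$ is a finite set of coset representatives for $\hat{F}_n/\tilde U$. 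This profinite Reidemeister--Schreier step follows by continuity from its discrete counterpart and yields that $U$ is topologically finitely presented.

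For the backward direction of (2), I adapt the strategy of Proposition~\ref{prop:fgfptower}. After replacing $U$ by the intersection of all its $\pi/U$-conjugates, which is an open normal subgroup that remains topologically finitely presented by the already-proved forward direction, we may assume $U \trianglelefteq \pi$ is normal. Then the topological analogs of the relations \eqref{eq:rel1} and \eqref{eq:rel2}, imposed on topological generators of $U$ augmented by coset representatives $x_t$ for $t \in \pi/U$, yield a topological presentation of $\pi$ by the same index-counting argument as in the discrete proof, now applied to continuous surjections of profinite groups. The main obstacle is the profinite Reidemeister--Schreier identification of $N$ as a closed normal subgroup of $\tilde U$ with a finite topological normal generating set; once this is granted, both statements follow by the same formal manipulations as in Proposition~\ref{prop:fgfptower}.
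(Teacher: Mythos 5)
Your proof is correct, but it takes a genuinely different route from the paper in the two directions where the finiteness property has to descend from $\pi$ to the open subgroup $U$. The paper simply invokes standard references there: for finite generation it cites Wilson's \cite[Prop.~4.3.1]{wilsonprofinite}, and for finite presentation it cites Lubotzky's cohomological characterization \cite[Thm.~0.3]{Lub01} combined with Shapiro's Lemma (which transfers the finiteness of $H^2(\pi,-)$ on finite modules to the corresponding finiteness for $U$ via induction). You instead argue structurally and in parallel with the discrete Proposition~\ref{prop:fgfptower}: first the profinite Nielsen--Schreier theorem (an open subgroup of a free profinite group of finite rank is again free profinite of finite rank), and then a profinite Reidemeister--Schreier step, namely that the closed normal closure in $\hat F_n$ of the relators $r_1,\ldots,r_k$ equals the closed normal closure, inside the open subgroup $\tilde U$, of the finitely many conjugates $t r_i t^{-1}$ with $t$ running over a transversal. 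That identity does indeed follow from the discrete coset-decomposition argument by density of the abstract normal closure inside the closed one, so the reasoning is sound. Your route is more self-contained and avoids cohomology at the cost of invoking the profinite Nielsen--Schreier theorem as a black box; the paper's citations keep the proof shorter. For the easier directions, where the property passes from $U$ up to $\pi$, both you and the paper argue by direct analogy with the discrete case, so those parts coincide.
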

\begin{proof}
If $U$ is topologically finitely generated (resp.\ finitely presented), then the same holds for $\pi$ with an analogous proof as in Proposition~\ref{prop:fgfptower}. 
For the converse direction in (1) we refer 
\cite[Prop.~4.3.1]{wilsonprofinite}. 
The converse direction in (2) follows from the criterion in \cite[Thm.~0.3]{Lub01} thanks to Shapiro's Lemma.
\end{proof}

Recall the central definition of this note from the introduction.

\begin{defn} \label{defn:pprime}
A profinite group $\pi$ is said to be  \textbf{$p'$-discretely finitely generated} (resp.\ \textbf{$p'$-discretely finitely presented})   if there is a finitely generated (resp.\ presented)  discrete group $\Gamma$ together with a group homomorphism 
\[
\gamma: \Gamma \to \pi
\]
such that
\begin{enumerate}[label=(\roman*)]
\item the profinite completion 
$\hat \gamma: \hat \Gamma \to \pi$ is surjective;
\item
for any open subgroup $U\subset \pi$ 
with $\Gamma_U := \gamma^{-1}(U)$ 
the restriction 
$\gamma_U: \Gamma_U \to U$ induces a continuous group isomorphism on pro-$p'$-completions 
\[
\gamma_U^{(p')}: \Gamma_U^{(p')}\to U^{(p')}.
\]
\end{enumerate}
\end{defn}

\begin{rmk} \label{rmk:pprim}
A $p'$-discretely finitely generated (resp.\ finitely presented) profinite group $\pi$ has in particular by definition the property that $\pi$ is finitely generated (resp.\ $\pi^{(p')}$ is finitely presented as an object of the category of  pro-$p'$ groups). 

\end{rmk} 

\begin{rmk} \label{rmk:surj}
The condition (i) implies that for any $U$ as in (ii), the map $\hat \gamma_U: \hat \Gamma_U \to  U$ is surjective as well. 
Indeed, we must show that for all open normal subgroups $V \subseteq U$ the composition $\Gamma_U \to U \to U/V$ is surjective. Cofinally among these $V$ are open subgroups that are even normal in $\pi$. Now $\Gamma \surj \pi/V$ is surjective by assumption, and the preimage of $U/V$ is $\Gamma_U$. 
\end{rmk}

\subsection{Finiteness properties of fundamental groups} Of primary interest for us are the  (tame) fundamental groups of smooth  projective varieties (resp.\ smooth varieties with a good compactification).
 
\begin{prop} \label{prop:crit}
Let $X$ be a smooth proper scheme defined over an algebraically closed field $k$ of characteristic $p$. If 
$\pi_1(X)$ is not $p'$-finitely presented, for example if  $\pi_1(X)$ is not even $p'$-finitely generated, then $X$ is not liftable to characteristic $0$.
\end{prop}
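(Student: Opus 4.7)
The plan is to prove the contrapositive: assuming $X$ admits a smooth proper lift $X_A$ over $\Spec(A)$ to characteristic zero, I will exhibit data $\gamma\colon \Gamma \to \pi_1(X)$ witnessing $\pi_1(X)$ as $p'$-discretely finitely presented in the sense of Definition~\ref{defn:pprime}. The ``not $p'$-discretely finitely generated implies not liftable'' variant then follows formally, since a finitely presented discrete group is in particular finitely generated.

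The group $\Gamma$ and the homomorphism $\gamma$ are exactly those sketched in the introduction. First I would choose an embedding $K \inj \C$, let $\bar\eta\colon \Spec(\C) \to \Spec(A)$ be the associated complex geometric generic point, form $X_{\bar\eta} = X_A \times_{\Spec(A)} \Spec(\C)$, and set $\Gamma := \pi_1^{\rm top}(X_{\bar\eta}(\C))$. Since $X_{\bar\eta}(\C)$ is a compact smooth complex manifold, Morse theory endows it with the homotopy type of a finite CW-complex, so $\Gamma$ is finitely presented. Combining Grothendieck's isomorphism \eqref{eq:GrothendieckIso}, the base change isomorphism $\pi_1(X_{\bar\eta}) \cong \pi_1(X_{\overline{K}})$, the Riemann Existence identification $\hat\Gamma \cong \pi_1(X_{\bar\eta})$, and the specialization map $\mathrm{sp}$ yields
\[
\gamma\colon \Gamma \to \hat\Gamma \cong \pi_1(X_{\overline{K}}) \xrightarrow{\mathrm{sp}} \pi_1(X_A) \cong \pi_1(X).
\]

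Next I would verify the two conditions of Definition~\ref{defn:pprime}. Condition (i) is immediate: $\hat\gamma$ agrees with $\mathrm{sp}$ modulo the isomorphisms above, and $\mathrm{sp}$ is surjective. For (ii), fix an open subgroup $U \subseteq \pi_1(X)$ and let $X_U \to X$ be the corresponding connected finite \'etale cover; by Grothendieck's equivalence it lifts uniquely to a finite \'etale cover $X_{U,A} \to X_A$, and its complex fibre $X_{U,\bar\eta}$ is the connected covering space of $X_{\bar\eta}$ whose topological fundamental group is exactly $\Gamma_U = \gamma^{-1}(U)$. Running the same chain of isomorphisms for $X_U$ in place of $X$ identifies $\gamma_U\colon \Gamma_U \to U$, after profinite completion, with the specialization map $\mathrm{sp}_U$ for $X_U$. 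Since pro-$p'$-completion of a discrete group factors through its profinite completion, $\Gamma_U^{(p')} = \widehat{\Gamma_U}^{(p')} = \pi_1(X_{U,\overline{K}})^{(p')}$, and $\mathrm{sp}_U$ is an isomorphism on pro-$p'$-completions by \cite[Exp.~XIII Cor.~2.12]{SGA1}; hence $\gamma_U^{(p')}$ is an isomorphism. I do not anticipate a serious obstacle, as all the key inputs are already assembled in the introduction. The only modest bookkeeping will be the identification $\Gamma_U = \pi_1^{\rm top}(X_{U,\bar\eta}(\C))$, which comes down to tracking preimages along $\Gamma \to \hat\Gamma \cong \pi_1(X_{\overline{K}}) \surj \pi_1(X)$ together with the Galois correspondence between finite-index subgroups of $\Gamma$ and connected finite covering spaces of $X_{\bar\eta}(\C)$.
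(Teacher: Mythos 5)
Your proposal is correct and follows essentially the same route as the paper: the paper proves Proposition~\ref{prop:crit} by reducing to its Example~\ref{ex:ex}, which constructs the very same $\gamma\colon \Gamma=\pi_1^{\rm top}(X(\C))\to\pi_1^t(X_{\bar k})$ via Riemann Existence and Grothendieck specialization, verifies surjectivity of $\hat\gamma$ and the pro-$p'$ isomorphism via \cite[Exp.~XIII Cor.~2.12]{SGA1}, and propagates to open $U\subseteq\pi_1(X)$ by lifting the corresponding finite \'etale cover over the base and matching fundamental groups in a fibre square. The only cosmetic differences are that the paper works over a complete DVR $V$ and in the more general tame setting with a good compactification, whereas you specialize directly to the proper case and an embedding $K\hookrightarrow\C$.
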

\begin{proof}
This follows from the example below which treats the more general case of a smooth variety with normal crossing compactification. Note that if $X$ lifts to characteristic $0$, then there exists in particular a lift $X_V$ as described below.
\end{proof}

\begin{ex} \label{ex:ex}
    Let  $V$ be a complete discrete valuation ring of mixed characteristic $(0,p)$ with residue field $k$. Let $\Spec(\C )\to \Spec(V)$ be a 
    geometric generic point.  Let $X_V$  be a smooth proper scheme  over $V$   such that there is a compactification $X_V\hookrightarrow \bar X_V$ over $V$, where $\bar X_V\setminus X_V$ is a relative normal crossings divisor.  
Let $\Gamma:=\pi_1^{\rm top}(X(\C))$ be the topological fundamental group, which is  finitely presented. We compose the comparison isomorphism between the profinitely completed topological fundamental group and the \'etale fundamental group $\pi_1(X_\C)$
\[
\pi_1^{\rm top}(X(\C)) \to \widehat{\pi_1^{\rm top}(X(\C))} \xrightarrow{\sim} \pi_1(X_\C)
\]
with  Grothendieck's specialization homomorphism \cite[Exp.~XIII 2.10]{SGA1}
\[
  {\rm sp} \colon \pi_1(X_\C) \to \pi_1^t(X_{\bar k}),
\]
where $\pi_1^t(X_{\bar  k})$ the \'etale tame fundamental group of $X$ over $\bar k$, to obtain a homomorphism
\[
\gamma \colon \Gamma \to \pi_1^t(X_{\bar k}).
\]
The argument for curves given in \cite[Exp.~XIII Cor.~2.12]{SGA1} extends mutatis mutandis\footnote{The key input is the more general \cite[Exp.~XIII, Cor.~2.8]{SGA1}.} to $X_V$ and shows that ${\rm sp}$ is surjective and ${\rm sp}^{(p')}$ is an isomorphism. It follows that $\hat \gamma$ is surjective and $\gamma^{(p')}$ is an isomorphism.

We now show that the pro-$p'$-isomorphism property also holds for finite index subgroups.  For any open subgroup $U\subset \pi^t(X)$ define the corresponding $f: X_U\to X$ which is  connected finite \'etale and tame and has $\pi^t(X_U)=U$.  By \cite[Thm.~18.1.2]{EGAIV(4)} the cover lifts\footnote{That is the surjectivity part of $\hat{\gamma}$.} to a connected cover $f_V: X_{U, V} \to X_{V}$ thus to 
  $f_{\C}: X_{U,\C}\to X_\C$ in such a way that 
  \ga{}{\xymatrix@M+1ex{ \ar[d]_{\rm inj} \Gamma_U=\pi_1^{\rm top} (X_U(\C)) \ar[r]^(0.6){\rm sp} & \ar[d]^{\rm inj}\pi^t_1(X_{U,\bar  k})\\
 \Gamma = \pi_1^{\rm top}(X(\C)) \ar[r]^(0.6){\rm sp} &  \pi_1^t(X_{\bar k})
   }\notag}
is a fibre square.  We conclude 
that $\pi_1^t(X_{\bar k})$ is $p'$-discretely finitely generated.
 \end{ex}

Recall from  \cite{ESS21}  that, as a profinite group, $\pi_1^t(X)$  is finitely presented. 
It is natural to ask whether without the liftability assumption, $\pi_1^t(X_{\bar k})$ is always $p'$-discretely finitely  presented.
We shall prove in Section~\ref{sec:ex}  that it is even not necessarily $p'$-discretely finitely generated, producing thereby {\it a new liftability obstruction, notably for smooth projective varieties}.

\begin{rmk}
\label{rmk:nonhomeomorphic}
For a given profinite group $\pi$ that is $p'$-discretely finitely presented, the discrete group $\Gamma$ that realizes the discrete finite presentation by $\Gamma \to \pi$ is not uniquely determined by $\pi$. 
 Serre constructs in \cite{Ser64} an algebraic variety $X$ over a number field $k$ that upon different complex embeddings $\sigma, \tau \colon k \to \C$ yields non-homeomorphic complex manifolds $X^\sigma(\C)$, $X^\tau(\C)$. Their algebraic origin shows that the \'etale fundamental groups $\pi_1(X^\sigma_\C) \simeq \pi_1(X^\tau_\C)$ are isomorphic, but their topological fundamental groups are not. 
\end{rmk}

\section{Independence of  \texorpdfstring{$\ell$}{l} and rationality} 

\subsection{Rationality of representations} 
Let $G$ be a finite group. We recall how to decide if a complex linear representation of $G$ is defined over $\Q$, see e.g. \cite[Chap.~12]{Serre:reps}. The ring of complex valued characters $R_G$ has subrings
\[
R_{G}(\Q) \subseteq \bar{R}_G(\Q) \subseteq R_G
\]
where $R_{G}(\Q)$ is the ring of characters defined over $\Q$, and $\bar{R}_G(\Q)$ is the ring of $\Q$-valued characters. Wedderburn's Theorem decomposes the group ring $\Q[G]$ 
of $G$ according to the distinct irreducible representations $V_i$ of $G$ in $\Q$-vector spaces as
\begin{equation}
\label{eq:wedderburn}
\Q[G] = \prod_{i=1}^r \End_{D_i}(V_i)
\end{equation}
with simple factors  isomorphic to matrix rings $\rM_{d_i}(D_i)$ over skew fields $D_i = \End_{G}(V_i)$ with center $K_i$. Let $\chi_i : G \to \Q$ be the character of $V_i$ as a $G$-representation over $\Q$. These $\chi_i$ form a basis of $R_G(\Q)$. 

Next, using the reduced trace $\End_{D_i}(V_i) \to K_i$ composed with an embedding $\sigma : K_i \inj \C$ instead, we obtain a complex character $\psi_{i,\sigma} : G \to \C$. 
The $\psi_{i,\sigma}$ for all $i$ and all $\sigma$ form a basis of $R_G$, and the $\psi_i = \sum_{\sigma} \psi_{i,\sigma}$ form a basis of $\bar{R}_G(\Q)$ according to \cite[Prop.~35]{Serre:reps}.
Now $\dim_{K_i}(D_i) = m_i^2$ is the square of the index of $D_i$ as a skew field over $K_i$. The Schur index of the representation $V_i$ is this $m_i$. By \cite[Chap.~12]{Serre:reps} we have
$\chi_i = m_i \psi_i$ and so 
\[
\bar{R}_G(\Q)/R_G(\Q) = \bigoplus_{i=1}^r \Z/m_i\Z.
\]
This means that  a general  complex valued character $\chi = \sum_{i,\sigma} d_{i,\sigma} \psi_{i,\sigma}$  arises from a representation defined over $\Q$ if and only if the following two conditions are satisfied:
\begin{enumerate}
\item
the character must be Galois invariant: the values lie in $\Q$, i.e., the coefficients $d_{i,\sigma}$ are independent of $\sigma$; say $\chi = \sum_i d_i \psi_i$, and 
\item the coefficients $d_i$ must be divisible by the Schur index $m_i$. 
\end{enumerate}
 
\begin{rmk}
Since $G$ is a finite group, any representation in a $\Q$-vector space stabilizes a $\Z$-lattice (e.g. the lattice $\Lambda = \sum_{s \in G} s \Lambda_0$ generated by the $G$-translates of any lattice $\Lambda_0$)
and hence is even definable over $\Z$. So integrality is no further constraint for a representation of a  finite group $G$.
\end{rmk}

\subsection{Independence of \texorpdfstring{$\ell$}{l}}

Let $\pi$ be a profinite group, and let $\ph : \pi \surj G$ be a finite quotient with kernel $U_\ph = \ker(\ph)$. We denote by $U_\ph^\ab$ its abelianization. Then conjugation induces a commutative diagram
\begin{equation}
\label{eq:outerGaction}
\xymatrix@M+1ex{\pi \ar[r] \ar@{->>}[d]^{\ph} & \Aut(U_\ph) \ar@{->>}[d]  \ar[r] & \Aut(U_\ph^\ab) \\
G \ar[r] & \Out(U_\ph) \ar[ur] & }
\end{equation}
If $\pi$ is finitely generated, then $U_\ph$ is finitely generated by Proposition~\ref{prop:profinite_fgfptower}.
We deduce  that $U_\ph^\ab$ is a finitely generated $\hat \Z$-module. 
 The resulting $G$-representations with values in finite dimensional $\Q_\ell$-vector spaces are denoted by 
\begin{equation}
\label{eq:theelladicrep}
\rho_{\ph,\ell} : G \to \GL(U_\ph^\ab \otimes \Q_\ell),
\end{equation}
with character 
\[
\chi_{\ph,\ell} = \tr(\rho_{\ph,\ell}) : G \to \Q_\ell.
\]

\begin{defn}
\label{defn:independence}
A profinite group $\pi$ is said to satisfy  \textbf{independence of $\ell$} with respect to a prime number $p$ if 
\begin{enumerate}[label=(\roman*)]
\item as a profinite group $\pi$ is finitely generated, and
\item \label{defn:independenceii} for all continuous finite quotients $\ph : \pi \surj G$ the following holds: for all $\ell \not= p$ the characters $\chi_{\ph,\ell}$ have values in $\Z$ and are independent of $\ell$.
\end{enumerate}
\end{defn}

\begin{rmk}
For a profinite group as in Definition~\ref{defn:independence} we define the following variant of condition \ref{defn:independenceii} 
\begin{enumerate}
\item[(ii')] for each $\ell\neq p$ fix an embedding $\Q_\ell \subset \C$. Then the $\rho_{\ph, \ell}$, viewed by scalar extension as  representations of $G$ in $\C$ vector spaces,  are all isomorphic for  all $\ell \neq p$. 
\end{enumerate}
Then  \ref{defn:independenceii}  is equivalent to (ii'). As $G$ is finite and $\C$ is of characteristic $0$, the representations are semisimple and thus determined by their characters. Consequently \ref{defn:independenceii}  implies (ii'). 

Conversely, if (ii') holds, then all characters $\chi_{\ph,\ell} : G \to \Q_\ell$ agree after composition with the chosen embedding $\Q_\ell \subset \C$ with a complex valued character $\chi : G \to \C$. Let $F \subseteq \C$ be the subfield generated by the values of $\chi$. This is an abelian number field since all eigenvalues are roots of unity. Moreover, the field $F$ is contained in $\Q_\ell \subset \C$ for all $\ell \not= p$, i.e., $F$ has a split place above $\ell$. It follows that $F/\Q$ is completely split over all $\ell \not= p$, and thus $F = \Q$ by Cebotarev's Theorem.
Therefore all $\chi_{\ph,\ell}$ take values in rational algebraic integers, i.e. in $\Z$, and these values independent of $\ell \not= p$.

We formulated condition (ii) rather than (ii') because of it suggests a motivic flavour.
\end{rmk}

\begin{prop}
\label{prop:discretelyfgimpliesindependenceofell}
Let $p$ be a prime number. Let $\pi$ be a profinite group which is $p'$-discretely finitely generated via $\Gamma \to \pi$. Then $\pi$ satisfies independence of $\ell$ with respect to $p$.
\end{prop}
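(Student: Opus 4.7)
The plan is to use condition (ii) of Definition~\ref{defn:pprime} to realize all the $\ell$-adic representations $\rho_{\ph,\ell}$ simultaneously as scalar extensions of a single $\Q[G]$-module, from which independence of $\ell$ and $\Z$-valuedness of the character will be immediate.

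First I would note that condition (i) of Definition~\ref{defn:pprime} forces $\pi$ to be topologically finitely generated, since the image of any finite generating set of $\Gamma$ topologically generates $\pi$; this takes care of condition (i) of Definition~\ref{defn:independence}. Now fix a continuous finite quotient $\ph \colon \pi \surj G$ with kernel $U = U_\ph$, and set $\Gamma_U = \gamma^{-1}(U)$. The density of the image of $\Gamma$ in $\pi$ together with the finiteness of $G$ show that $\gamma$ induces an isomorphism $\Gamma/\Gamma_U \cong G$. Proposition~\ref{prop:fgfptower} (Reidemeister--Schreier) then produces a finitely generated abelian group $A := \Gamma_U^{\ab}$, which carries a natural $G$-action by conjugation.

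The homomorphism $\gamma_U \colon \Gamma_U \to U$ is $\Gamma$-equivariant, and since the two conjugation actions descend to $G$ on both sides, the induced map $A \to U^{\ab}$ is $G$-equivariant. The key input from condition (ii) is that $\gamma_U^{(p')}$ is an isomorphism; because pro-$p'$-completion commutes with abelianization (both represent the functor of finite abelian $p'$-quotients), this supplies a $G$-equivariant isomorphism
\[
A \otimes_{\Z} \hat\Z^{(p')} \;\cong\; (U^{\ab})^{(p')}.
\]
Decomposing into prime components and tensoring with $\Q_\ell$ for any $\ell \neq p$ yields a $G$-equivariant isomorphism of $\Q_\ell[G]$-modules
\[
(A \otimes_{\Z} \Q) \otimes_{\Q} \Q_\ell \;\cong\; U^{\ab} \otimes_{\hat\Z} \Q_\ell.
\]
Hence $\rho_{\ph,\ell}$ is the scalar extension to $\Q_\ell$ of the fixed rational representation $A \otimes_\Z \Q$ of $G$. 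The character $\chi_{\ph,\ell}$ therefore agrees with the character of $A \otimes_\Z \Q$ for every $\ell \neq p$, which is independent of $\ell$ and $\Q$-valued; since characters of representations of finite groups in characteristic $0$ take values that are algebraic integers, such a $\Q$-valued character is automatically $\Z$-valued.

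The step requiring the most care is the verification of $G$-equivariance throughout and the compatibility of pro-$p'$-completion with abelianization on both the discrete and the profinite side; once these compatibilities are in hand, the rationality conclusion is essentially formal.
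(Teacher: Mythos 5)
Your proposal is correct and follows essentially the same route as the paper's proof: you define a $\Q$-representation of $G$ on $\Gamma_U^{\ab}\otimes\Q$ and use the hypothesis that $\gamma_U^{(p')}$ is an isomorphism to identify $\rho_{\ph,\ell}$ with its scalar extension to $\Q_\ell$ for every $\ell\neq p$. You spell out the points the paper leaves implicit — $G$-equivariance via conjugation, that pro-$p'$-completion commutes with abelianization, and the finite generation of $\Gamma_U^{\ab}$ via Reidemeister--Schreier — but the core argument is identical.
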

\begin{proof}
Let $\ph: \pi \surj G$ be a finite continuous quotient.  The composite map $f: \Gamma \to G$ defines similarly with $\Gamma_\ph = \ker(f)$ and $\Gamma_{\ph}^\ab$ a representation
\ga{int}{
\rho_\ph : G \to \GL( \Gamma_{\ph}^\ab \otimes \Q)
}
in a finite dimensional $\Q$-vector space $\Gamma_{\ph}^\ab \otimes \Q$. The assumption on $\Gamma \to \pi$ yields that $\Gamma_\ph \to U_\ph$ is an isomorphism on pro-$p'$ completion, hence in particular for all $\ell \not= p$ 
 the homomorphism $\Gamma_\phi\to U_\phi$ induces a $G$-equivariant isomorphism
\[
\Gamma_{\ph}^\ab \otimes_{\Z} \Q_\ell = U_\ph^\ab \otimes_{\hat \Z} \Q_\ell.
\]
Thus the  $\rho_{\ph,\ell}$, for the various $\ell \not= p$, are compatible and {\it even definable over} $\Q$. 
\end{proof}

\begin{prop}
\label{prop:iindependenceofellgeometrriccase}
Let $k$ be an algebraically closed field, and let $X/k$ be a smooth proper variety over $k$. Then $\pi_1(X)$ satisfies independence of $\ell$ with respect to the characteristic of $k$. In case $k$ has characteristic $0$, the prime $p$ is arbitrary.
\end{prop}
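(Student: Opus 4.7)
The plan is to identify the $G$-representations $U_\varphi^\ab \otimes \Q_\ell$ geometrically with the rational $\ell$-adic Tate modules of the Albanese variety of a suitable Galois cover, and then apply the classical theorem that for any endomorphism of an abelian variety the characteristic polynomial on its $\ell$-adic Tate module is integral and independent of $\ell$.

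First I would verify hypothesis (i) of Definition~\ref{defn:independence}, that $\pi_1(X)$ is topologically finitely generated. In characteristic $0$ this is immediate from the Riemann Existence Theorem and the finite generation of the topological fundamental group of the complex analytification. In positive characteristic it is a classical theorem of Grothendieck, and in fact is already implied by the stronger finite presentability \cite[Thm.~1.1]{ESS21} recalled in the introduction.

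For hypothesis (ii), fix a continuous finite quotient $\varphi \colon \pi_1(X) \twoheadrightarrow G$ corresponding to a connected $G$-Galois finite \'etale cover $f \colon Y \to X$, with $Y$ again smooth proper over $k$ and $U_\varphi = \pi_1(Y)$. Let $A = \mathrm{Alb}(Y)$. The functorial $G$-action on $Y$ induces a $G$-action on $A$ by $k$-morphisms, and for each $\ell \neq p = \mathrm{char}(k)$ the Albanese morphism yields a natural $G$-equivariant isomorphism
\[
U_\varphi^\ab \otimes_{\hat\Z} \Z_\ell \;\xrightarrow{\sim}\; \pi_1(A) \otimes_{\hat\Z} \Z_\ell \;=\; T_\ell A,
\]
because every connected finite abelian \'etale cover of $Y$ of degree prime to $p$ is pulled back from an isogeny of $A$.

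For each $g \in G$ the induced morphism $g_\ast \colon A \to A$ differs from a group endomorphism $\tilde g \in \End(A)$ by a translation (any $k$-morphism of $A$ sending $0$ to $0$ is a homomorphism), and translations lie in a connected family of automorphisms of $A$ parametrised by $A$ itself and hence act trivially on $T_\ell A$. Thus $g$ and $\tilde g$ define the same $\Z_\ell$-linear automorphism of $T_\ell A$. Finally I would invoke the classical theorem of Weil (cf.\ Mumford, \emph{Abelian Varieties}, Ch.~IV): the characteristic polynomial of $\tilde g$ on $T_\ell A$ is a monic polynomial of degree $2\dim A$ with integer coefficients independent of $\ell \neq p$. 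Taking traces then yields
\[
\chi_{\varphi,\ell}(g) \;=\; \tr\bigl(\tilde g \mid T_\ell A \otimes \Q_\ell\bigr) \;\in\; \Z,
\]
independent of $\ell$, as required. The step requiring the most care is the $G$-equivariance of the Albanese identification together with the reduction from $g_\ast$ to its homomorphism part $\tilde g$; both ultimately rest on the fact that translations on an abelian variety act trivially on Tate modules.
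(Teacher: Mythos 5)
Your proof is correct and shares the key Albanese reduction with the paper, but the way you conclude is genuinely different and arguably cleaner. The paper, after passing to $A = \mathrm{Alb}(Y)$, first spreads the data out to a scheme of finite type over $\F_p$ and uses proper base change to reduce to $k = \bar\F_p$; only then does it invoke Katz--Messing \cite[Thm.~2.1(2)]{KM74}, which is a theorem about smooth \emph{projective} varieties over \emph{finite} fields (and ultimately rests on Deligne's proof of the Weil conjectures). You instead invoke the classical theorem of Weil on abelian varieties (Mumford, \emph{Abelian Varieties}, Ch.~IV, \S19): for any $\tilde g \in \End(A)$ over an arbitrary field, the characteristic polynomial of $\tilde g$ on $T_\ell A$ is monic with integer coefficients and independent of $\ell \neq \mathrm{char}(k)$. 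This has two payoffs: you avoid the spreading-out and base-change step entirely, and you obtain a single uniform argument valid in every characteristic, whereas the paper disposes of characteristic $0$ separately via the liftability machinery (Propositions~\ref{prop:crit} and~\ref{prop:discretelyfgimpliesindependenceofell}). Your passage from $g_\ast$ to its homomorphism part $\tilde g$ via the triviality of translations on the Tate module is exactly the point the paper flags parenthetically (``$G$ does act on $A$ by automorphisms that do not necessarily fix the origin''), and you handle it explicitly. The only thing worth spelling out a little more is that the Albanese variety exists for a smooth \emph{proper} (not necessarily projective) variety over an algebraically closed field, which is a theorem of Serre but is usually stated in the projective case; the paper quietly sidesteps this by saying ``we may thus replace $Y$ by $A$ and therefore in particular assume that $Y$ is a smooth projective variety.''
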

\begin{proof}
If $X$ lifts to characteristic $0$, then the combination of Proposition~\ref{prop:crit} and Proposition~\ref{prop:discretelyfgimpliesindependenceofell} shows the claim.

Let now $k$ be of positive characteristic $p$ and let $X$ be arbitrary. Let $\ph : \pi_1(X) \surj G$ be a finite continuous quotient, and let $Y \to X$ be the corresponding $G$-Galois \'etale cover. Then the $G$-representation $U_\ph^\ab \otimes \Q_\ell$ is dual to the natural $G$-representation on $\rH^1(Y,\Q_\ell)$. 
There is  a scheme $S$ of finite type over $\F_p$ such that 
$X$ and the graphs ${\rm graph}(g)\subset X\times_k X$ have smooth proper models $X_S, \  {\rm graph}(g)_S\subset X_S\times_SX_S$.  By proper base change for \'etale cohomology \cite[Exp. XII, Thm.~5.1]{SGA4},  we reduce to the case where $k=\bar \F_p$. 

Let $\alpha: Y \to A$ be the Albanese morphism of $Y$. Then $\pi_1^\ab(Y) \surj \pi_1(A)$ is surjective with finite kernel and thus $\alpha^\ast : \rH^1(A,\Q_\ell) \to \rH^1(Y,\Q_\ell)$ is a $G$-equivariant isomorphism (note that $G$ does act on $A$ by automorphisms that do not necessarily fix the origin). We may thus replace $Y$ by $A$ and therefore in particular assume that $Y$ is  a smooth projective variety. As any $g\in G$ acts via correspondences, the characteristic polynomial of each $g$ acting on $\rH^1(Y,\Q_\ell)$ lies in $\Z[T]$ and is independent of $\ell$, see \cite[Thm.2.1 (2)]{KM74} (here we need the projectivity  rather than the properness assumption). 
\end{proof}

\subsection{The obstruction imposed by the Schur index}
Let $\pi$ be a profinite group that satifies independence of $\ell$ with respect to the prime number $p$. This means for a finite quotient $\ph: \pi \surj G$ that the character
\[
\chi_\ph = \chi_{\ph,\ell}  = \tr(\rho_{\ph,\ell}) : G \to \Q_\ell.
\]
has values in $\Z$ and is independent of $\ell \not= p$. This character $\chi_\ph$ belongs to $\bar{R}_G(\Q)$, and the \textbf{Schur index obstruction} in the proper sense is its class
\[
[\chi_\ph] \in \bar{R}_G(\Q)/R_G(\Q).
\]
This is the obstruction for the representation associated to $\chi_\ph$ to be actually defined as a linear representation of $G$ in a $\Q$ vector space.

\begin{defn}
We say that a profinite group $\pi$ which satifies independence of $\ell$ with respect to the prime number $p$ is \textbf{(Schur) rational} if for all finite continuous quotients $\ph: \pi \surj G$ the Schur index obstruction class $[\chi_\ph]$ is trivial, i.e., there is an actual $G$ representation in a $\Q$-vector space $V_\ph$ that gives rise, for all $\ell \not= p$, to the $\ell$-adic representations 
\[
U_\ph^\ab \otimes \Q_\ell \simeq V_\ph \otimes_{\Q} \Q_\ell.
\]
\end{defn}

The following proposition was actually proved within the proof given of Proposition~\ref{prop:discretelyfgimpliesindependenceofell}. 

\begin{prop}
\label{prop:discretelyfgimpliesSchur}
Let $\pi$ be a profinite group which is $p'$-discretely finitely generated.  Then $\pi$ satisfies independence of $\ell$ and moreover is rational.
\end{prop}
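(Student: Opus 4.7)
The plan is to observe that the construction used inside the proof of Proposition~\ref{prop:discretelyfgimpliesindependenceofell} already produces an honest $\Q$-linear representation of $G$, not merely a compatible family of $\Q_\ell$-representations. That single $\Q$-representation will simultaneously witness both independence of $\ell$ and the vanishing of the Schur index obstruction, so rationality is essentially a free byproduct of what has already been done.

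First, I would fix an arbitrary finite continuous quotient $\ph : \pi \surj G$ with kernel $U_\ph$, together with a witness $\gamma : \Gamma \to \pi$ for $\pi$ being $p'$-discretely finitely generated. Set $\Gamma_\ph := \gamma^{-1}(U_\ph)$ and $f := \ph \circ \gamma : \Gamma \to G$. Condition (i) of Definition~\ref{defn:pprime} combined with Remark~\ref{rmk:surj} implies that $f$ is surjective, so $\Gamma_\ph \subseteq \Gamma$ has finite index $|G|$. By Proposition~\ref{prop:fgfptower} the group $\Gamma_\ph$ is finitely generated, hence $\Gamma_\ph^\ab \otimes_\Z \Q$ is a finite-dimensional $\Q$-vector space. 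Conjugation by lifts of elements of $G$ defines a representation
\[
\rho_\ph : G \to \GL(\Gamma_\ph^\ab \otimes_\Z \Q),
\]
exactly as in equation~\eqref{int}.

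Next, I would invoke condition (ii): the induced map $\gamma_\ph^{(p')} : \Gamma_\ph^{(p')} \to U_\ph^{(p')}$ is an isomorphism of pro-$p'$ groups. For any prime $\ell \neq p$, the abelianization followed by $-\otimes \Q_\ell$ factors through pro-$p'$-completion, since this functor annihilates both the pro-$p$ part and all torsion. Consequently one obtains a $G$-equivariant $\Q_\ell$-linear isomorphism
\[
\Gamma_\ph^\ab \otimes_\Z \Q_\ell \;\xrightarrow{\;\sim\;}\; U_\ph^\ab \otimes_{\hat\Z} \Q_\ell,
\]
which identifies $\rho_{\ph,\ell}$ with the scalar extension $\rho_\ph \otimes_\Q \Q_\ell$.

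From this identification, both conclusions fall out. The character $\chi_{\ph,\ell}$ is the $\Q_\ell$-scalar extension of the rational character $\tr(\rho_\ph) : G \to \Q$; values of characters of representations of a finite group are algebraic integers, so this common character takes values in $\Q \cap \overline{\Z} = \Z$ and is manifestly independent of $\ell$, giving independence of $\ell$. Moreover, $V_\ph := \Gamma_\ph^\ab \otimes_\Z \Q$ is itself a $\Q$-linear representation of $G$ whose base change to $\Q_\ell$ recovers $U_\ph^\ab \otimes \Q_\ell$, so the class $[\chi_\ph] \in \bar{R}_G(\Q)/R_G(\Q)$ vanishes, which is exactly Schur rationality. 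There is no real obstacle here; the only subtle point is the compatibility between pro-$p'$-completion, abelianization, and $-\otimes \Q_\ell$, but each of these is routine once one notes that all $p$-power torsion and all pro-$p$ quotients are killed by tensoring with $\Q_\ell$ for $\ell \neq p$.
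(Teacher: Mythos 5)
Your proposal is correct and follows exactly the paper's approach: the paper states that Proposition~\ref{prop:discretelyfgimpliesSchur} was already proved inside the proof of Proposition~\ref{prop:discretelyfgimpliesindependenceofell}, where the $\Q$-representation $\rho_\ph : G \to \GL(\Gamma_\ph^\ab \otimes \Q)$ is constructed and shown via the pro-$p'$-isomorphism $\gamma_\ph^{(p')}$ to have scalar extension $\rho_{\ph,\ell}$, yielding both independence of $\ell$ and rationality in one stroke. Your write-up just spells out the routine compatibility between pro-$p'$-completion, abelianization, and $-\otimes\Q_\ell$ in more detail than the paper does.
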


Not being Schur rational is inherited for fundamental groups in the following geometric context. We may focus on positive characteristic, because fundamental groups of smooth proper varieties in characteristic $0$ satisfies independence of $\ell$ and are rational due to Proposition~\ref{prop:discretelyfgimpliesSchur}.

\begin{prop}
\label{prop:productSchurobstructed}
Let $k$ be an algebraically closed field, and let $X$ and $Y$ be  smooth proper varieties over $k$.
If $\pi_1(X)$ is not Schur rational, then $\pi_1(X \times_k Y)$ is not Schur rational either and in particular $X \times_k Y$ does not lift to characteristic $0$.
\end{prop}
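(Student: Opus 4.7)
The plan is to transport the Schur obstruction from $\pi_1(X)$ to $\pi_1(X \times_k Y)$ along the first projection. The first ingredient I need is the K\"unneth formula for \'etale fundamental groups (SGA~1, Exp.~X): since $X$ and $Y$ are smooth proper and $k$ is algebraically closed, there is a canonical isomorphism
\[
\pi_1(X \times_k Y) \cong \pi_1(X) \times \pi_1(Y).
\]
In particular $\pi_1(X \times_k Y)$ is topologically finitely generated and, by Proposition~\ref{prop:iindependenceofellgeometrriccase}, satisfies independence of $\ell$ with respect to $p = \ch(k)$, so Schur rationality is a meaningful property to test on it.

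Next, I will pick a finite continuous quotient $\ph \colon \pi_1(X) \surj G$ with $[\chi_\ph] \neq 0$ in $\bar R_G(\Q)/R_G(\Q)$ (such a $\ph$ exists by hypothesis) and form the composite $\tilde\ph \colon \pi_1(X \times_k Y) \surj \pi_1(X) \surj G$. Its kernel is $U_{\tilde\ph} = U_\ph \times \pi_1(Y)$, whose abelianisation splits as
\[
U_{\tilde\ph}^{\ab} \cong U_\ph^{\ab} \oplus \pi_1(Y)^{\ab}.
\]
Every $g \in G$ admits a lift of the form $(\tilde g, 1) \in \pi_1(X) \times \pi_1(Y)$, and such a lift centralises the second factor. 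Hence the conjugation action of $G$ on $U_{\tilde\ph}^{\ab}$ coincides with the original action on $U_\ph^{\ab}$ and is trivial on $\pi_1(Y)^{\ab}$. For every $\ell \neq p$ this yields a $G$-equivariant decomposition
\[
\rho_{\tilde\ph,\ell} \cong \rho_{\ph,\ell} \oplus \one_G^{\oplus N_\ell},
\]
with $N_\ell = \dim_{\Q_\ell}\bigl(\pi_1(Y)^{\ab} \otimes_{\hat \Z} \Q_\ell\bigr)$ independent of $\ell$ by independence of $\ell$ applied to $Y$.

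Passing to characters, the trivial summand contributes $N \cdot \one_G$, an element of $R_G(\Q)$, so $[\chi_{\tilde\ph}] = [\chi_\ph] \neq 0$ in $\bar R_G(\Q)/R_G(\Q)$. This shows that $\pi_1(X \times_k Y)$ is not Schur rational, and then Proposition~\ref{prop:discretelyfgimpliesSchur} together with Proposition~\ref{prop:crit} gives that $X \times_k Y$ does not lift to characteristic $0$. The only real subtlety in this strategy is the identification of the $G$-action on $\pi_1(Y)^{\ab}$ as trivial, which however is immediate once one writes down the K\"unneth decomposition and chooses the privileged lifts $(\tilde g, 1)$; everything else reduces to bookkeeping in the character ring $\bar R_G(\Q)/R_G(\Q)$.
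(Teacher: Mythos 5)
Your argument is correct and follows the same route as the paper: compose $\ph$ with the first projection coming from the K\"unneth isomorphism $\pi_1(X\times_k Y)\cong \pi_1(X)\times\pi_1(Y)$, observe that the resulting character differs from $\chi_\ph$ by a multiple of the trivial character $\one_G$, and conclude that the class in $\bar R_G(\Q)/R_G(\Q)$ is unchanged. The only difference is that you make explicit the triviality of the $G$-action on $\pi_1(Y)^{\ab}$ via the lifts $(\tilde g,1)$, which the paper leaves implicit.
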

\begin{proof}
Let $\ph: \pi_1(X) \surj G$ be a finite quotient such that the corresponding character $\chi_\ph$ has non-trivial class in $\bar{R}_G(\Q)/R_G(\Q)$. 
As $X$ (in fact even $X$ and $Y$) are proper, we have the K\"unneth formula, see \cite[Exp.X, Cor. 1.7]{SGA1},
\[
\pi_1(X\times_k Y)= \pi_1(X) \times \pi_1(Y).
\]
Composition with the first projection $\ph \circ \pr_1 : \pi_1(X \times_k Y) \surj G$ leads to the character 
\[
\chi_{\ph \circ \pr_1} = \chi_\ph + \dim_{\Q_\ell} \rH^1(Y,\Q_\ell) \cdot \one_G
\]
where $\one_G$ is the trivial character of $G$. Because $\one_G$ is defined over $\Q$, it follows that $\chi_{\ph \circ \pr_1}$ has the same class in $\bar{R}_G(\Q)/R_G(\Q)$ as $\chi_\ph$. This proves the claim.
\end{proof}

\section{Curves with many automorphisms } \label{sec:manyaut}

\subsection{Action on \texorpdfstring{$\rH^1$}{first cohomology}}
In this section, we consider a specific curve $C$ defined over a finite field with a very large group $G$ of automorphisms, and we single out a property of  the representation of $G$ on its first $\ell$-adic cohomology  $H^1(C, \Q_\ell)$    which prevents a variety $X$ constructed in the style of Serre to lift to characteristic $0$.

We start with the well known fact that this action is faithful.

\begin{prop}
\label{prop:AUTfaithfulOnH1}
Let $C$ be a smooth projective curve of genus $g \ge 2$ over an algebraically closed field $k$. Then, for all $\ell$ different from the characteristic of $k$, the representation
\[
\rho_\ell \colon \Aut(C) \inj \GL\big(\rH^1(C,\Q_\ell)\big)
\]
is faithful.
\end{prop}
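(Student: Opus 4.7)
The plan is to prove injectivity of $\rho_\ell$ directly: if $\sigma \in \Aut(C)$ acts trivially on $\rH^1(C,\Q_\ell)$, then the Grothendieck--Lefschetz trace formula forces $\sigma$ to have a negative number of fixed points, an absurdity for $g \geq 2$.

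More precisely, suppose $\sigma$ lies in the kernel of $\rho_\ell$. Since $\sigma$ is an automorphism of the connected smooth projective curve $C$, it acts trivially on $\rH^0(C,\Q_\ell) = \Q_\ell$ and on $\rH^2(C,\Q_\ell) \cong \Q_\ell(-1)$ (the action on $\rH^2$ is by the degree, which is $1$). Assuming also that $\sigma$ acts trivially on $\rH^1$, one obtains
\[
\sum_i (-1)^i \tr\bigl(\sigma^* \mid \rH^i(C,\Q_\ell)\bigr) = 1 - 2g + 1 = 2 - 2g.
\]
If $\sigma \neq \id$, then the fixed subscheme $C^\sigma$ is a proper closed subscheme of the smooth irreducible curve $C$ and hence a finite set of points. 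Consequently the graph $\Gamma_\sigma \subset C \times_k C$ meets the diagonal $\Delta_C$ in a zero-dimensional subscheme, and the Grothendieck--Lefschetz trace formula identifies the alternating sum above with the intersection number $(\Delta_C \cdot \Gamma_\sigma)$. That intersection number equals $\sum_{P \in C^\sigma} i_P(\Delta_C, \Gamma_\sigma)$, which is a sum of positive integers and is therefore non-negative.

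Combining, $2 - 2g \geq 0$, contradicting the hypothesis $g \geq 2$. Hence $\sigma = \id$, and $\rho_\ell$ is faithful. The only delicate point is ensuring that every local contribution $i_P(\Delta_C, \Gamma_\sigma)$ is a positive integer; this is immediate from the standard interpretation of the intersection number via the length of the scheme-theoretic intersection at $P$, and no transversality or tameness hypothesis is needed since positivity of the local length suffices. This part is routine, so I do not expect a genuine obstacle.
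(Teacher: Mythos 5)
Your proof is correct and follows essentially the same argument as the paper: both use the Grothendieck--Lefschetz trace formula to show that an automorphism acting trivially on $\rH^1$ would have a fixed-point intersection number $2-2g<0$, an absurdity. You merely spell out the details (trivial action on $\rH^0$ and $\rH^2$, positivity of local intersection multiplicities) that the paper leaves implicit.
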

\begin{proof}
Let $g \in G$ be nontrivial and in the kernel. Then the graph of $g$ in $C \times_k C$ and the diagonal  intersect in a  scheme of dimension $0$, the degree of which we can compute cohomologically by the Grothendieck--Lefschetz formula as 
\[
|\textrm{degree of the fixed point scheme of } g \textrm{ on } C| = \tr\big(g^\ast | \rH^*(C,\Q_\ell)\big) = 2 - 2g < 0.
\]
This is absurd.
\end{proof}

\subsection{The Roquette curve}
\label{sec:Roquettecurve}
In \cite[\S4]{Roq70} Roquette defines the smooth projective curve $\RoqFp$ over $\F_p$ which is the normal
compactification of the affine curve defined by 
\ga{}{\RoqFp: \  y^2=x^p-x, \notag}
which we call {\it Roquette curve} in this note. The map $(x,y)\mapsto x$ defines $\RoqFp$ as a double cover $\RoqFp \to \P^1$. It follows that for $p=2$ the curve $\RoqFp$ is rational, and thus we shall consider only the case $p > 2$ from now on. For $p\neq 2$ the hyperelliptic cover $\RoqFp \to \P^1$ considered above is tame, and the Riemann--Hurwitz formula immediately yields the genus $g = g(\RoqFp)$ as
\ga{}{2g = p-1.\notag}
In particular, the Roquette curve has genus $\ge 2$ if and only of $p\ge 5$. 

We set $C = \Roq := \RoqFp \otimes \bar \F_p$. By \cite[\S4]{Roq70}, the group of automorphisms $\Aut(C)$ over $\bar \F_p$  is of cardinality equal to 
\[
|\Aut(C)| = 2 \cdot |\PGL_2(\F_p)| =  2p(p^2-1).
\] 
For $p \ge 5$,  the size of $\Aut(C)$ exceeds the Hurwitz bound $84(g-1)$, which bounds from above the order of automorphism groups of curves of genus $g \ge 2$ in characteristic $0$. Actually Roquette proved in \cite{Roq70} that among curves of genus $g$ with $p > g+1$, the Roquette curve is the only curve that fails the Hurwitz bound.

We shall use the precise group structure of $\Aut(C)$, and also that all automorphisms are defined over $\F_{p^2}$ on $\RoqFpp := \RoqFp \otimes \F_{p^2}$, see Proposition~\ref{prop:allAutdefinedFpp}. As the existing literature does not seem to give the precise structure of this group, we refer to Appendix~\ref{app:A} for it. 

\begin{prop} \label{prop:irr}
For all  $\ell \not= p$, the representation 
\[
\rho_\ell \colon \Aut(C) \to \GL(\rH^1(C, \Q_\ell))
\]
is absolutely irreducible. 
\end{prop}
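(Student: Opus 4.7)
The plan is to identify $\chi_\ell := \tr \rho_\ell$ as the character of an irreducible representation of $\Aut(C)$ by computing it explicitly on conjugacy classes and verifying the Schur orthogonality relation
\[
\frac{1}{|\Aut(C)|} \sum_{g \in \Aut(C)} |\chi_\ell(g)|^2 = 1.
\]
Throughout, the explicit structure of $\Aut(C)$ from Appendix~\ref{app:A} is used to enumerate the conjugacy classes.

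The first key input is that the hyperelliptic involution $\iota \in \Aut(C)$ is central and acts as $-1$ on $\rH^1(C,\Q_\ell)$: indeed $C/\langle\iota\rangle \cong \P^1$ has vanishing first \'etale cohomology. Hence $\rho_\ell$ does not factor through $\PGL_2(\F_p) = \Aut(C)/\langle\iota\rangle$. For $g \in \Aut(C)$ of order coprime to $p$, one applies Grothendieck--Lefschetz to get $\chi_\ell(g) = 2 - \#\mathrm{Fix}_C(g)$, and this count is organized by first computing the (one or two) fixed points of $\bar g \in \PGL_2(\F_p)$ acting on $\P^1$ and then lifting them through the hyperelliptic cover $C \to \P^1$ branched over $\P^1(\F_p)$.

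For $g \in \Aut(C)$ of order divisible by $p$, direct Lefschetz is distorted by wild ramification at the fixed points. Instead, the trace can be read off from the action on the rational Tate module of $\mathrm{Jac}(C)$: any nontrivial order-$p$ element of $\GL_{p-1}(\Q_\ell)$ has characteristic polynomial $(T-1)^a \Phi_p(T)^b$ with $a + b(p-1) = p - 1$, where $\Phi_p$ is the $p$-th cyclotomic polynomial. Faithfulness (Proposition~\ref{prop:AUTfaithfulOnH1}) forces $b \geq 1$, hence $a = 0$, $b = 1$, and trace $-1$. The order-$2p$ elements arise as $\iota g'$ with $g'$ of order $p$, and therefore have trace $+1$.

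Assembling the character on all conjugacy classes and checking $\langle \chi_\ell, \chi_\ell \rangle = 1$ completes the proof. The main obstacle, beyond the class-by-class bookkeeping, is that $\rho_\ell$ may fail to remain irreducible upon restriction to the subgroup $\SL_2(\F_p) \subseteq \Aut(C)$: when $p \equiv 1 \pmod 4$ the restriction can split as a sum $\pi^+ \oplus \pi^-$ of two $(p-1)/2$-dimensional ``half-cuspidal'' representations of $\SL_2(\F_p)$. Absolute irreducibility for the full group $\Aut(C)$ is then recovered because these two summands must be swapped by an outer involution coming from $\Aut(C)/\SL_2(\F_p) \cong \Z/2$, as the character computation makes manifest.
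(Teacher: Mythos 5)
Your plan is a genuinely different route from the paper's, but as written it is a proof \emph{outline} rather than a proof: the decisive step, ``assembling the character on all conjugacy classes and checking $\langle\chi_\ell,\chi_\ell\rangle = 1$,'' is precisely where the content lives, and you defer it entirely. Carrying it out would require enumerating all conjugacy classes of $\Aut(C)$ (an order $2p(p^2-1)$ group) and, for each class of $p'$-order, performing the fixed-point lift through the hyperelliptic cover; none of this is done. The paper avoids this bookkeeping completely: it observes (Lemma~\ref{lem:pelementsallconjugateinG}) that all elements of order $p$ are conjugate in $\Aut(C)$, so every one of the $p-1$ nontrivial characters of the cyclic $p$-Sylow $N$ occurs in any $G$-representation with the \emph{same} multiplicity (Proposition~\ref{prop:samemultiplicity}); since $\dim \rH^1(C,\Q_\ell) = 2g = p-1$ and $N$ acts nontrivially by faithfulness (Proposition~\ref{prop:AUTfaithfulOnH1}), each such character occurs exactly once and any nonzero subrepresentation containing a nontrivial $N$-character is already all of $\rH^1$. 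No character values on $p'$-classes are ever needed.

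Two further remarks on your sketch. First, the step ``order-$2p$ elements arise as $\iota g'$ with $g'$ of order $p$, and therefore have trace $+1$'' is not justified as stated: you need to argue that the power $g^p$ of an order-$2p$ element is the hyperelliptic involution (rather than some other involution), or, more robustly, argue directly via characteristic polynomials: if $g$ has order $2p$ then $\rho_\ell(g^2)$ has characteristic polynomial $\Phi_p$, so $\rho_\ell(g)$ has characteristic polynomial $\Phi_p$ or $\Phi_{2p}$, and the first is excluded since $g$ does not have order $p$; then $\Phi_{2p}(T)=\Phi_p(-T)$ gives trace $+1$. Second, the closing paragraph about restriction to $\SL_2(\F_p)$ and possible ``half-cuspidal'' splittings is speculative commentary that plays no role in the argument; if the orthogonality check were actually carried out, it would settle the matter regardless of any intermediate decomposition, so this paragraph neither helps nor hurts but should be cut from a final proof.
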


\begin{proof}
We denote by $N$ a $p$-Sylow subgroup of $\Aut(C)$. 
The dimension of $\rH^1(C, \Q_\ell)$ is $2g = (p-1)$, so that by Proposition~\ref{prop:irreducibleGrep} it is enough to check that $\rho_\ell|_N$ contains a non-trivial character, or equivalently that $\rho_\ell|_N$ is not trivial. This follows immediately from Proposition~\ref{prop:AUTfaithfulOnH1}. 

\end{proof}
It turns out that all we need from the Roquette curve is the geometric irreducibility proven in Proposition~\ref{prop:irr}.

\subsection{Curves with Schur obstruction}
\label{sec:curvesWithSchur}
Let $C$ be a smooth projective curve over $\bar \F_p$ of genus $g \ge 2$ such that the following holds:
\begin{enumerate}
\item[($\star$)] 
for all $\ell \not= p$ the representation of $G = \Aut(C)$ on $\rH^1(C,\Q_\ell)$ is 
absolutely  irreducible. 
\end{enumerate}
Let $J=J(C)$ be the Jacobian of $C$ and let $\rV_\ell(J) = \rT_\ell(J) \otimes_{\Z_\ell} \Q_\ell$ be the rational Tate module of $J$. 

\begin{lem}
\label{lem:SurjectiveEndJ}
Let $C$ be a smooth projective curve with $(\star)$.
For all  $\ell \not= p$, the natural map 
\[
\Q_\ell[G] \to \End(V_\ell(J))
\]
is surjective.
\end{lem}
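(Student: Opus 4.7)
The plan is to reduce Lemma~\ref{lem:SurjectiveEndJ} to the classical theorem of Burnside: whenever a group acts on a finite-dimensional vector space by an absolutely irreducible representation, the image of the group algebra fills out the entire endomorphism ring. Hypothesis $(\star)$ supplies absolute irreducibility on $\rH^1(C,\Q_\ell)$, so once that property is transported to $V_\ell(J)$, the argument is purely formal.

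First I would invoke the canonical comparison between $V_\ell(J)=V_\ell(\Pic^0(C))$ and the first $\ell$-adic cohomology of $C$. The Kummer sequence for $\Pic^0(C)$ together with Poincaré duality on the curve (or the Weil pairing on $J$) produces an $\Aut(C)$-equivariant isomorphism of $\Q_\ell$-vector spaces
\[
V_\ell(J) \simeq \rH^1(C,\Q_\ell)^{\vee},
\]
where any Tate twist is trivial as a $G$-module because $G=\Aut(C)$ is finite. Functoriality of $\Pic^0$ in the curve ensures the $G$-equivariance. Since passage to the contragredient preserves (absolute) simplicity and commutes with extension of scalars, hypothesis $(\star)$ transports to the assertion that $V_\ell(J)$ is an absolutely simple $\Q_\ell[G]$-module.

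Second, I apply Burnside's theorem directly. Absolute simplicity combined with Schur's lemma gives $\End_{\Q_\ell[G]}(V_\ell(J)) = \Q_\ell$, and the double centraliser theorem (equivalently, Jacobson density in the finite-dimensional setting) then identifies the image of $\Q_\ell[G]$ in $\End_{\Q_\ell}(V_\ell(J))$ with the centraliser of $\Q_\ell$ in $\End_{\Q_\ell}(V_\ell(J))$, that is, with all of $\End_{\Q_\ell}(V_\ell(J))$. This is precisely the required surjectivity.

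I do not expect any genuine obstacle here: the cohomological identification is standard and its $G$-equivariance is immediate, while Burnside's theorem is textbook. The only point requiring care is to invoke Schur's lemma in its \emph{absolute} form, so that the commutant is the ground field $\Q_\ell$ rather than some larger division algebra over it; this is exactly what the absolute irreducibility in $(\star)$ is designed to guarantee.
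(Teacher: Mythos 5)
Your proposal is correct and follows essentially the same route as the paper: identify $V_\ell(J)$ with the $G$-equivariant dual of $\rH^1(C,\Q_\ell)$, transport absolute irreducibility across the duality, and conclude surjectivity by Burnside's theorem. The paper compresses the last step into the phrase ``standard representation theory of finite groups,'' which is exactly the Schur/Burnside argument you spell out.
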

\begin{proof}
Since  $\rH^1(C,\Q_\ell) = \Hom(V_\ell(J),\Q_\ell)$, 
the representation $G \to \GL(V_\ell(J))$ is dual to the representation on $\rH^1(C,\Q_\ell)$, which we assume to be absolutely irreducible. 
The claim follows from standard representation theory of finite groups.
\end{proof}

The following result is well known for the Roquette curve (\cite[p.~172]{Eke87} using slopes in crystalline cohomology) and in fact is a property shared by many curves with exceptionally large automorphism group.

\begin{prop}  \label{prop:ss}
Let $C$ be a smooth projective curve with $(\star)$. Then $C$ is supersingular. 
\end{prop}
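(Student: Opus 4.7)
The plan is to use the absolute irreducibility hypothesis $(\star)$ to force the geometric Frobenius to act on $V_\ell(J)$ by a single scalar, and then to read off the Newton polygon.

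First spread out: since $C/\bar\F_p$ is of finite type and $G=\Aut(C)$ is finite, there is a finite subfield $\F_q\subset\bar\F_p$ over which $C$ is defined and over which every $g\in G$ is $\F_q$-rational; after replacing $q$ by $q^2$ if necessary we may additionally assume that $q$ is a perfect square. Write $C=C_0\otimes_{\F_q}\bar\F_p$, and let $F=F_q\in\End(J)$ denote the geometric Frobenius of $J=J(C)$. Because every $g\in G$ is defined over $\F_q$, the endomorphism $F$ commutes with the image of $G$ in $\End(J)$, and hence acts on $V_\ell(J)$ as a $G$-equivariant $\Q_\ell$-linear endomorphism for every $\ell\neq p$.

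Next apply Schur. By $(\star)$ the $G$-representation $\rH^1(C,\Q_\ell)$ is absolutely irreducible and $V_\ell(J)$ is its $\Q_\ell$-dual, so $\End_G(V_\ell(J))=\Q_\ell$ (this is also the content of Lemma~\ref{lem:SurjectiveEndJ} via the double commutant). Therefore $F$ acts on $V_\ell(J)$ as multiplication by a scalar $\alpha_\ell\in\Q_\ell$, and its characteristic polynomial is $(T-\alpha_\ell)^{2g}$. By the Weil conjectures for abelian varieties over finite fields this polynomial has integer coefficients independent of $\ell$, so $\alpha_\ell=\alpha\in\Z$. The Riemann hypothesis part of Weil gives $|\alpha|=q^{1/2}$, forcing $\alpha=\pm q^{1/2}$. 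Writing $q=p^n$ with $n$ even, the slope of each eigenvalue of $F$ on $V_\ell(J)$ is $v_p(\alpha)/v_p(q)=(n/2)/n=1/2$, so the Newton polygon of $J/\F_q$ is the straight line of slope $\tfrac12$. This is exactly supersingularity of $J$, and hence of $C$.

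The decisive step is the Schur collapse forcing $F$ to act as a scalar; the rest is standard Weil-number arithmetic. The only mild subtlety is arranging simultaneously that $C$ and \emph{all} of $G$ are $\F_q$-rational and that $q$ is a square, but this is harmless since everything in sight is preserved under enlargement of the base field.
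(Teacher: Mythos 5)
Your proof is correct and uses exactly the same key idea as the paper: apply Schur's lemma (via the absolute irreducibility in~$(\star)$, equivalently Lemma~\ref{lem:SurjectiveEndJ}) to force the geometric $q$-Frobenius to act as a single scalar on $V_\ell(J)$, then use that the characteristic polynomial has integer coefficients independent of $\ell$ to conclude the Weil number is $\pm\sqrt q$. From there you read off the Newton polygon and invoke the characterization of supersingular abelian varieties as those with all slopes $\frac12$, whereas the paper finishes differently: it uses Honda--Tate to exhibit a supersingular elliptic curve $E_0/\F_q$ with Weil number $\pm\sqrt q$ and Tate's theorem~\cite{Tat66} to show $J$ is actually isogenous to~$E^g$. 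The two endings buy different things. Yours is slightly more elementary if one takes the Newton-polygon condition as the definition of supersingularity (otherwise you are silently invoking the equivalence with isogeny to $E^g$, a standard but nontrivial fact). The paper's version is chosen deliberately so as to have the isogeny $J\sim E^g$ in hand, which is precisely what is needed in the very next proposition (Proposition~\ref{prop:Schur}) to identify $\End^0(J)\simeq\rM_g(D)$ with $D$ the quaternion algebra ramified at $p$ and $\infty$. So your proof is a clean way to establish Proposition~\ref{prop:ss} as stated, but the paper's slightly longer argument does a bit more work that pays off immediately afterwards.
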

\begin{proof}
Let $C_0/\F_q$ be a model of $C$ such that all automorphisms of $C$ are defined as automorphisms of $C_0$ over $\F_q$. Let $J_0$ be the Jacobian of $C_0$, so that $J =  J_0 \otimes_{\F_q} \bar F_p$. The geometric $q$-Frobenius of $C_0$ acts on $V_\ell(J_0) = V_\ell(J)$ commuting with $G$. The centralizer of the image of $G$ in $\End(V_\ell(J))$ consists only of scalars due to Lemma~\ref{lem:SurjectiveEndJ}. 

It follows that the $q$-Weil numbers associated to $J$ as the Jacobian of the curve $C$ defined over $\F_q$ are contained in a number field that admits an embedding to $\Q_\ell$ for all $\ell \not= p$. This must be $\Q$. The only $q$-Weil numbers that are rational are $\pm \sqrt{q}$, and $q$ must be a square. Since Frobenius acts as scalar, only one of the possible Weil numbers occurs as eigenvalue of Frobenius. By Honda-Tate theory, and because $q$ is a square, there is a supersingular elliptic curve $E_0$ over $\F_q$ with the same Weil number. We set $E  = E_0 \otimes_{\F_q} \bar \F_p$. It follows that 
\[
V_\ell(E^g) \simeq V_\ell(J)
\]
as Galois representations. By the Tate conjecture~\cite[Thm.~1]{Tat66} we find that $J$ and $E^g$ are isogenous, and that proves the claim.
\end{proof}

\begin{rmk}
Our main example will be the Roquette curve for which Proposition~\ref{prop:ss} has the following elementary shortcut. The hyperelliptic double cover $\RoqFp \to \P^1$ allows to count 
\[
\# \RoqFp(\F_p) = p+1.
\]
Concerning rational points over $\F_{p^2}$, we note that (1) they all lie over points in $\P^1(\F_{p^2}) \setminus \P^1(\F_p)$ and (2) the action of $G = \Aut(C)$ permutes all these possible images transitively. Since the hyperelliptic involution acts transtively on all fibres, we find that $\RoqFp(\F_{p^2})  \setminus \RoqFp(\F_p)$ is either empty or consists of $2 (p^2-p)$ many points. A precise calculation (which we omit because the precise description when which case occurs is irrelevant for us) shows 
\[
\# \RoqFp(\F_{p^2}) = \begin{cases}
p+1 & \textrm{ if } p \equiv 1 \pmod 4, \\
2p^2 - p + 1 & \textrm{ if } p \equiv 3 \pmod 4.
\end{cases}
\]
In any case, the Hasse-Weil bound for $\RoqFp$ and $\F_{p^2}$-rational points is sharp:
\[
| \#  \RoqFp(\F_{p^2}) - (1 + p^2)| = (p-1) \cdot p = 2g \sqrt{p^2}.
\]
In other words, the Roquette curve is minimal/maximal over $\F_{p^2}$, and this is only possible if the Frobenius eigenvalues are all $p$ or all $-p$. From here we argue as in the proof of Proposition~\ref{prop:ss}.
\end{rmk}

\begin{prop} \label{prop:Schur}
Let $C$ be a smooth projective curve with $(\star)$.  For $\ell \not= p$, the representation
\[
\rho_{J,\ell}: G \to \GL(\rV_\ell(J))
\]
has character with values in $\Z$ that is independent of $\ell$, but is not defined over $\Q$. 
The Schur index over $\Q$ is equal to $2$.
\end{prop}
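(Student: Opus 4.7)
The plan is to combine hypothesis $(\star)$ with the supersingularity of $C$ (Proposition~\ref{prop:ss}) to pin down exactly one simple Wedderburn factor of $\Q[G]$, and then to read off its Schur index.

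First, I would establish that $\chi_{J,\ell}$ is $\Z$-valued and independent of $\ell \neq p$ by an elementary Grothendieck--Lefschetz computation on $C$ itself: for non-identity $g \in G$ the graph of $g$ meets the diagonal in finitely many points, giving
\[
\#\mathrm{Fix}(g) = 2 - \tr\bigl(g \mid \rH^1(C,\Q_\ell)\bigr),
\]
so that on the dual $\rV_\ell(J)$ one has $\chi_{J,\ell}(g) = 2 - \#\mathrm{Fix}(g) \in \Z$, manifestly independent of $\ell$.

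Next, supersingularity gives $J \sim E^g$ for a supersingular elliptic curve $E/\bar\F_p$, so that
\[
\End^0(J) \;\cong\; \rM_g(B_{p,\infty}),
\]
where $B_{p,\infty}$ is the quaternion algebra over $\Q$ ramified exactly at $p$ and $\infty$; this is a central simple $\Q$-algebra of $\Q$-dimension $4g^2$ with center $\Q$ and Schur index $2$. The natural map $\Q[G] \to \End^0(J)$, tensored with $\Q_\ell$, factors through the surjection of Lemma~\ref{lem:SurjectiveEndJ} onto $\End(\rV_\ell(J))$, whose $\Q_\ell$-dimension equals $(2g)^2 = 4g^2$. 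Comparing dimensions forces $\Q[G] \to \rM_g(B_{p,\infty})$ itself to be surjective, exhibiting $\rM_g(B_{p,\infty})$ as one of the Wedderburn factors of $\Q[G]$, say the $i_0$-th, with $D_{i_0} \cong B_{p,\infty}$, $K_{i_0} = \Q$, and Schur index $m_{i_0} = 2$. This dimension count is the main technical step, relying crucially on supersingularity (to identify the target) and on hypothesis $(\star)$ through Lemma~\ref{lem:SurjectiveEndJ} (to saturate the image).

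Finally, I would identify $\chi_{J,\ell}$ with the absolutely irreducible character $\psi_{i_0}$ attached to this factor: since $K_{i_0} = \Q$ admits a unique complex embedding, only one summand $\psi_{i_0,\sigma}$ contributes; and since $\rV_\ell(J)$ is the unique simple $\Q_\ell[G]$-module corresponding to the split factor $\rM_g(B_{p,\infty}) \otimes_\Q \Q_\ell \cong \rM_{2g}(\Q_\ell)$, the $\Z$-valued $\ell$-independent character $\chi_{J,\ell}$ must coincide with $\psi_{i_0}$. The relation $\chi_{i_0} = m_{i_0}\psi_{i_0}$ then shows that
\[
[\chi_{J,\ell}] = 1 \in \Z/m_{i_0}\Z = \Z/2\Z
\]
inside the $i_0$-summand of $\bar{R}_G(\Q)/R_G(\Q) = \bigoplus_i \Z/m_i\Z$ is non-trivial. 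Hence $\chi_{J,\ell}$ is not defined over $\Q$, and the Schur index equals $2$.
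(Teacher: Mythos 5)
Your proof is correct and follows essentially the same strategy as the paper's: use supersingularity (Proposition~\ref{prop:ss}) to identify $\End^0(J) \cong \rM_g(D)$ with $D$ the quaternion algebra ramified exactly at $\{p,\infty\}$, invoke Lemma~\ref{lem:SurjectiveEndJ} to force $\Q[G] \to \rM_g(D)$ to be surjective, and read the Schur index $2$ off this Wedderburn factor. The differences are minor. For the $\Z$-valuedness and $\ell$-independence you run a direct Grothendieck--Lefschetz computation on $C$, while the paper cites the Katz--Messing argument of Proposition~\ref{prop:iindependenceofellgeometrriccase}; your route is more elementary and is in fact the same calculation the paper itself performs in proving Proposition~\ref{prop:AUTfaithfulOnH1}. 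And you establish surjectivity of $\Q[G] \to \rM_g(D)$ by a dimension count over $\Q_\ell$, whereas the paper deduces it from surjectivity after base change to $\Q_\ell$ together with the Tate conjecture identification $\End^0(J)\otimes\Q_\ell = \End_\Gal(\rV_\ell(J))$; these are equivalent. One small point of care in your Lefschetz step: a non-identity automorphism of $p$-power order in characteristic $p$ may fix a point non-transversally, so the Lefschetz number should be read as the degree of the fixed-point scheme (equivalently the intersection number $\Gamma_g \cdot \Delta$) rather than the naive count $\#\mathrm{Fix}(g)$. The paper writes this carefully in the proof of Proposition~\ref{prop:AUTfaithfulOnH1}; the conclusion that the character is $\Z$-valued and independent of $\ell$ is unchanged.
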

\begin{proof}
Since $V_\ell(J)$ is dual to $\rH^1(C,\Q_\ell)$, the character has values in $\Z$ and is independent of $\ell$, for $\ell \not= p$, by the same argument as in the proof of Propositiion~\ref{prop:iindependenceofellgeometrriccase}.

\smallskip

Let $E$ be a supersingular elliptic curve as in the proof of
Proposition~\ref{prop:ss} such that $J$ is isogenous to $E^g$. We denote by 
\[
D = \End^0(E)
\]
the endomorphisms of $E$ over $\bar \F_{p}$ up to isogeny. This is the unique quaternion algebra over $\Q$ ramified in $p$ and $\infty$ only\footnote{Indeed, the action on the $2$-dimensional $\rH^1(E_{\bar \F_{p}}),\Q_\ell)$ shows that $D \otimes \Q_\ell \simeq \rM_2(\Q_\ell)$ for all $\ell \not= p$, and since $D$ is a skew field ($E$ is simple) and not commutative (that's in fact one possible definition of supersingular elliptic curve, see \cite[\S7]{Deu41}) there is no other central simple algebra over $\Q$ of dimension $4$ due to the local global principle for central simple algebras, see Brauer-Hasse-Noether \cite[Hauptsatz, Red.~1]{BHN32}.}, see \cite[\S8]{Deu41}.

The natural representation
\[
\Q[G] \to \End^0(J) \simeq \rM_{g}(\End^0(E)) = \rM_g(D)
\]
becomes, due to the Tate conjecture~\cite[Thm.~1]{Tat66}, under extension of scalars to $\Q_\ell$
\[
\Q_\ell[G] \to \End^0(J) \otimes \Q_\ell = \End_{\Gal}\big(V_\ell(J)\big) \subseteq \End\big(V_\ell(J)\big).
\]
Here $\Gal$ indicates Galois invariant endomorphisms. We know from Lemma~\ref{lem:SurjectiveEndJ} that the composition is surjective. So the inclusion on the right is in fact an equality (which also follows, because Frobenius was identified with a scalar in the  proof of Proposition~\ref{prop:ss}).
It follows that $\Q[G] \surj \rM_g(D)$ is surjective and identified with the component of the Wedderburn decomposition \eqref{eq:wedderburn} of the group ring corresponding to the irreducible representation underlying the $\rho_{J,\ell}$. Its Schur index is the Schur index of $D$, which indeed is $2$. 
\end{proof}

\section{A non-\texorpdfstring{$p'$}{prime to p}-discretely finitely generated fundamental group} \label{sec:ex}

The example  presented in this section rests on  Serre's construction \cite[\S15]{Ser58} (which he attributes to Weil~\cite[Chap.III]{Wei38}). 
Let $C$ be a smooth projective curve of genus $\ge 2$ over $\bar \F_p$ that satisfies $(\star)$ of 
Section~\ref{sec:curvesWithSchur}, and let 
 $G={\rm Aut}(C)$ be its group of automorphisms.  As a concrete example we can use the Roquette curve as discussed in Section~\ref{sec:Roquettecurve}. Let $P$ be a smooth  projective,  connected and  simply connected variety over $\bar \F_p$, such that $G$ acts freely on $P$, 
 see \cite[Prop.~15]{Ser58}. We define
 \ga{}{ X =(C \times_k P) /G,  \notag}
 where the action of $G$ on $C \times_k P$ is the diagonal action.

\begin{thm} \label{thm:ex}
The fundamental group $\pi_1(X)$ is not $p'$-discretely finitely presented. 
\end{thm}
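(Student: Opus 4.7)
The plan is to derive a contradiction by combining the structural analysis of $\pi_1(X)$ with the Schur index obstruction from Proposition~\ref{prop:Schur}. The first step is to compute $\pi_1(X)$ as an extension. Since $G$ acts freely on $C \times_k P$, the quotient map $C \times_k P \to X$ is a $G$-Galois \'etale cover, yielding a short exact sequence
$$1 \to \pi_1(C \times_k P) \to \pi_1(X) \to G \to 1.$$
By the K\"unneth formula for \'etale fundamental groups of proper connected varieties \cite[Exp.~X, Cor.~1.7]{SGA1} together with $\pi_1(P) = 1$, we have $\pi_1(C \times_k P) = \pi_1(C) \times \pi_1(P) = \pi_1(C)$. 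Hence $\pi_1(C)$ sits as an open normal subgroup of $\pi_1(X)$ with quotient $G$, and the conjugation action of $G$ on $\pi_1(C)$ is induced by the natural action of $G = \Aut(C)$ coming from the restriction of the diagonal action to the factor $C$.

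Next, I would identify the $G$-representation attached to this quotient. Let $\ph : \pi_1(X) \surj G$ and set $U_\ph = \pi_1(C)$ as in Section~2. For any prime $\ell \neq p$, the abelianization gives a $G$-equivariant isomorphism
$$U_\ph^\ab \otimes_{\hat \Z} \Q_\ell \simeq V_\ell(J),$$
where $J = J(C)$ is the Jacobian; this is dual to $\rH^1(C, \Q_\ell)$. The representation $\rho_{\ph,\ell}$ from \eqref{eq:theelladicrep} thus coincides (up to duality) with $\rho_{J,\ell}$ of Proposition~\ref{prop:Schur}.

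The final step is to derive the contradiction. Suppose for contradiction that $\pi_1(X)$ were $p'$-discretely finitely generated. Then by Proposition~\ref{prop:discretelyfgimpliesSchur}, $\pi_1(X)$ would satisfy independence of $\ell$ and be Schur rational, so $\rho_{\ph,\ell}$ would come from an actual $G$-representation in a $\Q$-vector space. But since $C$ satisfies hypothesis $(\star)$ of Section~\ref{sec:curvesWithSchur}---which is ensured for the Roquette curve by Proposition~\ref{prop:irr}---Proposition~\ref{prop:Schur} asserts that $\rho_{J,\ell}$ has Schur index $2$ over $\Q$ and is therefore not definable over $\Q$. This contradiction shows that $\pi_1(X)$ is not even $p'$-discretely finitely generated, and \emph{a fortiori} not $p'$-discretely finitely presented.

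The step I expect to require the most careful verification is the identification of the $G$-action in the second paragraph: the conjugation action of $G$ on the open normal subgroup $\pi_1(C) \subseteq \pi_1(X)$ must be shown to agree with the natural $G = \Aut(C)$-action on $\pi_1(C)$, and hence on $V_\ell(J)$ and on $\rH^1(C, \Q_\ell)$. This is essentially formal from the diagonal action of $G$ on $C \times_k P$ and the functoriality of $\pi_1$, but it is the crucial link that allows the Schur obstruction---formulated for $\Aut(C)$ acting on $\rH^1(C, \Q_\ell)$---to obstruct $p'$-discrete finite generation of $\pi_1(X)$.
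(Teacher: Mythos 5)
Your proof is correct and follows essentially the same route as the paper: identify $\pi_1(C)$ as the open normal subgroup of $\pi_1(X)$ with quotient $G$ via the Künneth formula and Galois theory, recognize that the conjugation action induces the natural $\Aut(C)$-representation on $V_\ell(J)$, then combine Proposition~\ref{prop:discretelyfgimpliesSchur} with the Schur index $2$ conclusion of Proposition~\ref{prop:Schur} to rule out $p'$-discrete finite generation (and a fortiori finite presentation). The step you flagged as requiring care---matching the outer conjugation action with the functorial $\Aut(C)$-action as in \eqref{eq:outerGaction}---is precisely what the paper records in one sentence, and your identification is accurate.
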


Applying Proposition~\ref{prop:crit} we obtain the following.

 \begin{cor} \label{cor:nonlift}
 The variety $X$  does not lift to characteristic $0$. 
 \end{cor}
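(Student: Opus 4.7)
The plan is to deduce the corollary by combining Theorem~\ref{thm:ex} with the general lifting obstruction already established in Proposition~\ref{prop:crit}. Since the heavy lifting is done by the theorem, the proposal here is essentially a matter of verifying that the hypotheses of Proposition~\ref{prop:crit} are satisfied by our specific $X$, and then citing it.

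First I would check that $X = (C \times_k P)/G$ is a smooth projective variety over $k = \bar \F_p$. Smoothness of $C$ and $P$, together with the assumption that $G$ acts freely on $P$ (hence diagonally freely on $C \times_k P$), makes the quotient map $C \times_k P \to X$ an \'etale $G$-torsor, which transfers smoothness to $X$. Projectivity follows because $C \times_k P$ is projective and the quotient of a projective scheme by a free action of a finite group is again projective (take $G$-invariant sections of a sufficiently high power of an ample $G$-linearized line bundle). So $X$ falls into the class of schemes covered by Proposition~\ref{prop:crit}.

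Next I invoke Theorem~\ref{thm:ex}, which asserts that $\pi_1(X)$ is not $p'$-discretely finitely presented. By Remark~\ref{rmk:pprim}, or more directly by the definition, a $p'$-discretely finitely generated profinite group is in particular $p'$-discretely finitely presented only if the stronger property holds; regardless, Theorem~\ref{thm:ex} already furnishes the failure of the weaker finiteness property needed in Proposition~\ref{prop:crit}, namely that $\pi_1(X)$ fails to be $p'$-finitely presented. Applying Proposition~\ref{prop:crit} to our smooth proper $X/k$ with $k$ algebraically closed of characteristic $p$, we conclude that $X$ cannot admit a smooth proper lift over any complete local noetherian domain $A$ of mixed characteristic $(0,p)$ with residue field $k$. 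This is precisely the non-liftability statement of Corollary~\ref{cor:nonlift}.

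There is no genuine obstacle at this stage: the entire content of the corollary is packaged into Theorem~\ref{thm:ex} and Proposition~\ref{prop:crit}, and the only thing to verify is that $X$ is smooth projective, which is immediate from the construction. If a more careful formulation is desired, one could additionally remark that the non-liftability is very strong: by the same argument applied to any finite \'etale cover $X' \to X$ (which is again a smooth projective variety with the same non-$p'$-finitely-presented fundamental group up to open subgroup), no such $X'$ lifts either, since $p'$-discrete finite presentation passes to open subgroups via Proposition~\ref{prop:fgfptower} and Proposition~\ref{prop:profinite_fgfptower}.
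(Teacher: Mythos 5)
Your proof is correct and follows exactly the paper's route: Corollary~\ref{cor:nonlift} is obtained by applying Proposition~\ref{prop:crit} to the conclusion of Theorem~\ref{thm:ex}, and the paper's own justification is literally the single sentence ``Applying Proposition~\ref{prop:crit} we obtain the following.'' Your explicit verification that $X=(C\times_k P)/G$ is smooth and projective (via descent along the \'etale $G$-torsor $C\times_k P \to X$) is a reasonable sanity check that the paper leaves implicit.
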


  In particular, the condition for $\pi_1(X)$ to be $p'$-finitely discretely presented is a (new) obstruction for a characteristic $p$ smooth proper geometrically irreducible variety defined over an algebraically closed characteristic $p>0$ field to be liftable to characteristic $0$.
  
  \begin{proof}[Proof of Theorem~\ref{thm:ex}.]
As $G$ acts freely on $P$, the finite morphism $C \times_k P \to X$ is Galois \'etale of group $G$.  With 
$ \pi_1(C\times_k  P)=\pi_1(C)$,  due to the K\"unneth formula,
  Galois theory induces an exact sequence
  \ga{}{1\to \pi_1(C) \to \pi_1(X) \xrightarrow{\ph} G\to 1.\notag}
The outer $G$ action on $U_\ph = \pi_1(C)$ by conjugation considered in \eqref{eq:outerGaction} agrees with the natural action by the functor $\pi_1$ (without base point). The associated $\ell$-adic representations 
\[
\rho_\ell : G \to \GL(U_\ph^\ab \otimes \Q_\ell)
\] 
as considered in \eqref{eq:theelladicrep} therefore agree with the natural representations on $V_\ell(J)$, the rational Tate module of the Jacobian $J$ of $C$. 

It now follows from Proposition~\ref{prop:Schur} that $\rho_\ell$ is independent of $\ell$ but of Schur index $2$. Therefore $\pi_1(X)$ fails to be Schur rational and Proposition~\ref{prop:discretelyfgimpliesSchur} shows that $\pi_1(X)$ is not $p'$-discretely finitely generated.
\end{proof}

\appendix 
\section{The automorphism group of the Roquette curves} \label{app:A}

Recall from  Section~\ref{sec:manyaut} that the Roquette curve $\RoqFp$ over $\F_p$ is the smooth hyperelliptic curve of genus obtained as the  compactification of the affine curve defined by the equation
\[
y^2 = x^p - x.
\]
The Roquette curve $\RoqFp$ has genus $g = (p-1)/2$, so $g \ge 2$ if and only if $p\ge 5$. We are going to construct a finite group $G$, define an action of $G$ on $\RoqFpp = \RoqFp \otimes \F_{p^2}$, and show that $G$ is the full group of automorphisms of $\Roq = \RoqFp \otimes \bar \F_p$.

\smallskip

From now on we assume $p \geq 5$.

\subsection{The automorphisms}

The group of square roots 
\[
(\F_p^\times)^{1/2} := \{ \lambda \in \bar \F_{p}^\times \ ; \ \lambda^2 \in \F_p^\times\}
\]
is a cyclic subgroup of $\F_{p^2}^\times$ of order $2(p-1)$.  We define the group $\tilde G$ as the fibre product
\[
\tilde{G} := \{ (A,\lambda) \in  \GL_2(\F_p) \times (\F_{p}^\times)^{1/2} \ ; \ \det(A) = \lambda^2\}.
\]
The action of $\tilde{G}$ on $\RoqFpp$ arises as follows.  Let 
$g=(A,\lambda) \in  \tilde{G} \subseteq \GL_2(\F_p) \times \F_{p^2}^\times$ with matrix part 
$A=\begin{pmatrix}a & b \\  c&  d  \\ \end{pmatrix}$. 
Then we denote by $\alpha_g$ the map $\RoqFpp \to \RoqFpp$ defined in coordinates by
\begin{align*}
\alpha_g^\ast(x) &  := A(x) := \frac{ax + b}{cx+d}, \\ 
\alpha_g^\ast(y) & :=  \frac{\lambda \cdot y}{(cx+d)^{(p+1)/2}}. 
\end{align*}
Here $A(x)$ is the usual M\"obius action.

\begin{prop} \label{prop:GactionC}
The map $g \mapsto \alpha_g$ defined above yields a group homorphism
\[
\alpha: \tilde{G} \to \Aut_{\F_{p^2}}(\RoqFpp) .
\]
\end{prop}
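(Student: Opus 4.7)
The plan is to verify two things: first, that each $\alpha_g$ is a well-defined $\F_{p^2}$-morphism $\RoqFpp \to \RoqFpp$, i.e.\ that the formulae for $\alpha_g^*(x)$ and $\alpha_g^*(y)$ respect the defining relation $y^2 = x^p - x$; and second, that the rule $g \mapsto \alpha_g$ is multiplicative with respect to the componentwise product $(A,\lambda)\cdot(B,\mu) = (AB,\lambda\mu)$ on $\tilde G$. Once these are in place, the unit $(I_2,1) \in \tilde G$ visibly maps to $\id_{\RoqFpp}$, so multiplicativity automatically upgrades each $\alpha_g$ to an automorphism, and the image lies in $\Aut_{\F_{p^2}}(\RoqFpp)$ as asserted.

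For the first point I would substitute the formulae and compute $\alpha_g^*(y)^2$ and $\alpha_g^*(x)^p - \alpha_g^*(x)$ separately. The left-hand side equals $\lambda^2 y^2/(cx+d)^{p+1} = (ad-bc)(x^p-x)/(cx+d)^{p+1}$, using $y^2 = x^p - x$ and the constraint $\det(A) = \lambda^2$. For the right-hand side the decisive input is the Frobenius identity $(ax+b)^p = ax^p + b$ and $(cx+d)^p = cx^p + d$, valid because $a,b,c,d \in \F_p$. Clearing denominators, the numerator becomes $(ax^p+b)(cx+d) - (ax+b)(cx^p+d) = (ad-bc)(x^p-x)$ and the denominator becomes $(cx^p+d)(cx+d) = (cx+d)^{p+1}$, matching the left-hand side. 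Hence $\alpha_g^*$ defines an $\F_{p^2}$-algebra endomorphism of the function field, which extends uniquely to a morphism of the smooth projective model $\RoqFpp$.

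For the second point I need to check $\alpha_h^* \circ \alpha_g^* = \alpha_{gh}^*$ on the generators $x$ and $y$ (pullback is contravariant). On $x$ this is exactly the composition law for M\"obius transformations, $\mu_A \circ \mu_B = \mu_{AB}$, which holds by direct substitution since the entries of $A$ are constants under $\alpha_h^*$. On $y$ the bookkeeping is slightly more delicate. Writing $h = (B,\mu)$ with $B$ having bottom row $(\gamma,\delta)$, one computes
\[
c\,\alpha_h^*(x) + d = \frac{(c\alpha + d\gamma)x + (c\beta + d\delta)}{\gamma x + \delta},
\]
so the factor $(\gamma x + \delta)^{(p+1)/2}$ appearing in $\alpha_h^*(y)$ cancels against the denominator of $(c\,\alpha_h^*(x)+d)^{(p+1)/2}$. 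What remains is $\lambda\mu\cdot y/\bigl((c\alpha+d\gamma)x + (c\beta+d\delta)\bigr)^{(p+1)/2}$, and this is $\alpha_{gh}^*(y)$ because $(c\alpha+d\gamma,\,c\beta+d\delta)$ is precisely the bottom row of $AB$. This $y$-calculation is the only non-formal step in the proof; everything else reduces to the Frobenius identity and the standard composition law for M\"obius transformations, so no genuine obstacle is expected.
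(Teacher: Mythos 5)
Your proposal is correct and follows essentially the same route as the paper's proof: verify that $\alpha_g^\ast$ preserves the relation $y^2 = x^p - x$ using the Frobenius identity on $\F_p$-coefficients and $\det(A) = \lambda^2$, check multiplicativity separately on $x$ (M\"obius composition) and on $y$ (tracking how the denominator $(cx+d)^{(p+1)/2}$ transforms and identifying the bottom row of $AB$), and conclude from $\alpha_{(\I,1)} = \id$ that each $\alpha_g$ is invertible.
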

\begin{proof}
For $g = (A,\lambda) \in \tilde{G} $, with $A=\begin{pmatrix}a & b \\  c&  d  \\ \end{pmatrix}$, indeed $\alpha_g$ defines a map $\RoqFpp \to \RoqFpp$;
\begin{align*}
\alpha_g^\ast(y)^2  & = \frac{ \lambda^2 \cdot y^2}{(cx+d)^{p+1}} =  \frac{\det(A) \cdot(x^p - x)}{(cx+d)^{p+1}}  = \frac{(a x^p + b)(cx+d) - (ax+b)(cx^p+d)}{(cx^p+d)(cx+d)}  \\
& = \frac{ax^p + b}{cx^p + d} -  \frac{ax + b}{cx + d} = \alpha_g^\ast(x)^p - \alpha_g^\ast(x).
\end{align*}

For another element $h = (B,\mu) \in \tilde{G}$ with matrix part $B = \begin{pmatrix} a' & b' \\  c' &  d' \end{pmatrix}$ we compute
\[
\alpha_h^\ast(\alpha_g^\ast(x)) = B(A(x)) = (AB)(x) = \alpha_{gh}^\ast(x),
\]
because $\GL_2$ acts by M\"obius transformations on $\P^1$. Moreover, 
\begin{align*}
\alpha_h^\ast(\alpha_g^\ast(y) & = \alpha_h^\ast\big(\frac{\lambda \cdot  y}{(c x+d)^{(p+1)/2}}\big) 
= \frac{\lambda  \cdot  \frac{\mu \cdot y}{(c'  x+d')^{(p+1)/2}}}{(c \alpha_h^\ast(x) + d)^{(p+1)/2}}  = \frac{\lambda \mu  \cdot  y}{\big( (c \alpha_h^\ast(x) + d)(c'  x+d') \big)^{(p+1)/2}}   \\
& =\frac{\lambda \mu  \cdot  y}{\big( (c (a'x + b') + d(c' x+ d') \big)^{(p+1)/2}}   
 =\frac{\lambda \mu  \cdot  y}{\big( (ca' + d c') x + (cb' + dd') \big)^{(p+1)/2}}  = \alpha_{gh}^\ast(y).
\end{align*}
Since $\alpha_{(\I,1)}$ is the identity on $\RoqFpp$, 
here $\I$ is the unit matrix, the above shows simultaneously that $\alpha_g$ is an automorphism and $\alpha$ is a homomorphism.
\end{proof}

Let $\iota : \RoqFp \to \RoqFp$ be the hyperelliptic involution $(x,y) \mapsto (x,-y)$. Since $\iota$ acts as $-1$ on the Jacobian of $\RoqFp$, it centralizes all automorphisms of $C_k = \RoqFp \otimes k$ for any field $k$. In particular, any  automorphism $f: C_k \to C_k$ descends to a map $\bar f : \P^1_k \to \P^1_k$.
Since the ramification locus of the hyperelliptic covering $x : C_k \to \P_k^1$ consists of all $\F_p$-rational points, the induced map $\bar f$ must permute these. Therefore the M\"obius transformation describing $\bar f$ has matrix entries in $\F_p$ due to the following lemma. 

\begin{lem} \label{lem:PGL2p}
Let $k$ be a field of characteristic $p$. The group of automorphisms of $\P^1_k$ that permutes the subset $\P^1(\F_p)$ consists of the M\"obius transformations from $\PGL_2(\F_p)$.
\end{lem}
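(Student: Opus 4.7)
The plan is to use the sharp $3$-transitivity of $\PGL_2$ acting on $\P^1$ over any field. Let $f \in \Aut(\P^1_k) = \PGL_2(k)$ satisfy $f(\P^1(\F_p)) = \P^1(\F_p)$. The three points $0, 1, \infty \in \P^1(\F_p)$ have images $a := f(0)$, $b := f(1)$, $c := f(\infty)$, which are three pairwise distinct points of $\P^1(\F_p)$.

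Since $\PGL_2(\F_p)$ already acts sharply $3$-transitively on $\P^1(\F_p)$, there exists a unique $g \in \PGL_2(\F_p)$ with $g(0) = a$, $g(1) = b$, $g(\infty) = c$. Viewed inside $\PGL_2(k)$, both $f$ and $g$ are automorphisms of $\P^1_k$ agreeing on the triple $(0,1,\infty)$. By the sharp $3$-transitivity of $\PGL_2(k)$ on $\P^1(k)$, this forces $f = g$, so $f$ lies in the image of $\PGL_2(\F_p) \to \PGL_2(k)$.

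Conversely, any element of $\PGL_2(\F_p)$ clearly permutes $\P^1(\F_p)$, since a Möbius transformation with coefficients in $\F_p$ sends $\F_p$-rational points (and the point at infinity) to $\F_p$-rational points. This gives the reverse inclusion and completes the proof. The only subtlety to mention is why the map $\PGL_2(\F_p) \to \PGL_2(k)$ is injective, which follows from the fact that a representative matrix becoming scalar over $k$ must already be scalar over $\F_p$.
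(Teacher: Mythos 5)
Your proof is correct and follows precisely the route indicated by the paper, which simply cites sharp $3$-transitivity of $\PGL_2(k)$ on $\P^1(k)$; you have merely filled in the routine details (picking the triple $(0,1,\infty)$, using sharp $3$-transitivity over $\F_p$ to produce a candidate, then over $k$ to force equality, plus the converse).
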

\begin{proof}
The group $\PGL_2(k)$ acts sharply $3$-transitively on $\P^1(k)$ for all fields $k$.
\end{proof}

Let $k$ be a field containing $\F_{p^2}$. Then we deduce from Proposition~\ref{prop:GactionC} and Lemma~\ref{lem:PGL2p} a commutative diagram:
\begin{equation}
\label{eq:GcompareAut}
\xymatrix@M+1ex{
1 \ar[r] & \langle (\I, -1)\rangle \ar[r] \ar[d] & \tilde{G} \ar[d]^\alpha \ar[r]^{\pr_1} & \GL_2(\F_p) \ar[d] \ar[r] & 1\ \\
1 \ar[r] & \langle \iota \rangle \ar[r] & \Aut_k(C_k) \ar[r]^{f \mapsto \bar f} & \PGL_2(\F_p)   \ar[r] & 1.
}
\end{equation}
Here $\pr_1$ is the projection $(A,\lambda) \mapsto A$. 

\begin{prop}
\label{prop:allAutdefinedFpp}
Both rows of \eqref{eq:GcompareAut} are exact and the vertical maps are surjective. In particular, all the automorphisms of a Roquette curve are defined over $\F_{p^2}$.
\end{prop}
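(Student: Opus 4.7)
The plan is to verify the two rows for exactness and the three vertical arrows for surjectivity, exploiting the defining formulas of $\alpha$; the assertion about the field of definition then follows formally.

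The top row is straightforward. The kernel of $\pr_1$ consists of pairs $(\I,\lambda)$ with $\lambda^2=\det(\I)=1$, hence $\lambda=\pm 1$, giving $\langle(\I,-1)\rangle$. Surjectivity of $\pr_1$ is immediate: for any $A\in\GL_2(\F_p)$, the determinant $\det(A)\in\F_p^\times$ admits a square root in $(\F_p^\times)^{1/2}$ by the very definition of that group. For the bottom row, the hyperelliptic involution $\iota$ is central in $\Aut_k(C_k)$: a curve of genus $\ge 2$ has a unique hyperelliptic involution, intrinsically characterized, so any automorphism must commute with it. Consequently every $f\in\Aut_k(C_k)$ descends to a map $\bar f$ on the quotient $C_k/\langle\iota\rangle\simeq\P^1_k$. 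Since $\bar f$ permutes the branch locus $\P^1(\F_p)$ of $x$, Lemma~\ref{lem:PGL2p} places $\bar f$ in $\PGL_2(\F_p)$. If $\bar f=\id$, then $f$ preserves each fiber of $x$; on the generic fiber $\{y,-y\}$ this leaves only $f=\id$ or $f=\iota$, so the kernel of $f\mapsto\bar f$ is exactly $\langle\iota\rangle$.

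The heart of the argument is surjectivity of the middle vertical arrow $\alpha$, which simultaneously delivers surjectivity on the right and exactness of the bottom row at $\PGL_2(\F_p)$. Given $f\in\Aut_k(C_k)$, lift $\bar f\in\PGL_2(\F_p)$ to $\tilde A\in\GL_2(\F_p)$ and choose $\lambda\in(\F_p^\times)^{1/2}$ with $\lambda^2=\det(\tilde A)$. By the formulas for $\alpha$, the automorphism $\alpha_{(\tilde A,\lambda)}$ also has descent $\bar f$, so $f\circ\alpha_{(\tilde A,\lambda)}^{-1}$ lies in $\langle\iota\rangle=\langle\alpha_{(\I,-1)}\rangle$; in either of the two cases, $f$ is therefore in the image of $\alpha$. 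The left vertical arrow is surjective because $\alpha_{(\I,-1)}=\iota$, and the right vertical arrow follows from a diagram chase.

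Finally, the formulas defining $\alpha_g$ only involve $\lambda\in(\F_p^\times)^{1/2}\subseteq\F_{p^2}^\times$, so the image of $\alpha$ is contained in $\Aut_{\F_{p^2}}(\RoqFpp)$. Combined with the surjectivity of $\alpha$ onto $\Aut_{\bar\F_p}(\Roq)$ and the injection $\Aut_{\F_{p^2}}(\RoqFpp)\inj\Aut_{\bar\F_p}(\Roq)$ from base change, this forces the latter to be an isomorphism, proving that all automorphisms of the Roquette curve are defined over $\F_{p^2}$. The principal obstacle is the surjectivity of $\alpha$, which rests on the identification of $C_k/\langle\iota\rangle$ with $\P^1_k$ and on Lemma~\ref{lem:PGL2p}.
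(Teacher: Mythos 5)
Your proof is correct and takes essentially the same route as the paper: top row exact via surjectivity of squaring, left vertical an isomorphism via $\alpha_{(\I,-1)}=\iota$, bottom row left exact by Galois theory of the hyperelliptic cover together with Lemma~\ref{lem:PGL2p}, and then a diagram chase to get surjectivity of $\alpha$ and exactness of the bottom row at $\PGL_2(\F_p)$; you have simply made the chase explicit where the paper states ``it follows.'' The only stylistic deviation is your justification that $\iota$ is central (uniqueness of the hyperelliptic involution in genus $\ge 2$) where the paper notes $\iota$ acts as $-1$ on the Jacobian -- both are standard and valid -- and the remark that the right vertical arrow ``follows from a diagram chase'' is superfluous, since $\GL_2(\F_p)\to\PGL_2(\F_p)$ is the canonical quotient and surjective by definition.
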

\begin{proof}
The top row is exact because squaring is surjective as a map $(\F_p^\times)^{1/2} \surj \F_p^\times$.  The bottom row is left exact by Galois theory of the hyperelliptic cover $C_k \to \P^1_k$, and we are going to show that the map $f \mapsto \bar f$ is also surjective. 
The left vertical map is an isomorphism because of $\alpha((\I,-1)) = \iota$. The right vertical map is the natural projection and thus also surjective. It follows that the bottom row is also exact and that $\alpha$ is surjective. 

\end{proof}

Let $\qr{\lambda}{p} = \lambda^{(p-1)/2} \in \{\pm 1\}$ denote the Legendre quadratic residue symbol modulo $p$. Then 
\[
\lambda^{(p+1)/2} = \qr{\lambda}{p}\lambda
\]
and we have an injective group homomorphism
\[
\F_p^\times \to \tilde{G}, \qquad \lambda \mapsto (\lambda \I, \qr{\lambda}{p}\lambda),
\]
because $\det(\lambda \I) = \lambda^2 = (\qr{\lambda}{p}\lambda)^2$. All $(\lambda \I, \qr{\lambda}{p}\lambda)$ are contained in the kernel of $\alpha$. So a diagram chase with \eqref{eq:GcompareAut} shows the following.

\begin{prop}
\label{prop:structureG}
Let $k$ be a field containinig $\F_{p^2}$. 
The homomorphism $\alpha$ induces an isomorphism 
\[
G := \tilde{G}/\{ (\lambda \I, \qr{\lambda}{p}\lambda) \ ; \ \lambda \in \F_p^\times \} \xrightarrow{\sim} \Aut_k(C_k).
\]
\end{prop}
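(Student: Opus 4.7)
The plan is to identify the kernel of $\alpha$ explicitly and then invoke the first isomorphism theorem, using the machinery already set up in Proposition~\ref{prop:allAutdefinedFpp} and diagram~\eqref{eq:GcompareAut}.

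First, I would verify that the prescribed subgroup
\[
K := \{ (\lambda \I, \qr{\lambda}{p}\lambda) \ ; \ \lambda \in \F_p^\times \}
\]
is actually a subgroup of $\tilde{G}$. Multiplicativity amounts to the identity $\qr{\lambda\mu}{p} = \qr{\lambda}{p}\qr{\mu}{p}$, which is standard for the Legendre symbol. It is clearly central, as the first coordinate is a scalar matrix and the second coordinate lives in the abelian group $(\F_p^\times)^{1/2}$. It is then immediate from the explicit formulas that $K \subseteq \ker(\alpha)$: for $(\lambda\I, \qr{\lambda}{p}\lambda)$ one has $A(x) = x$, while the pullback of $y$ is multiplied by $\qr{\lambda}{p}\lambda/\lambda^{(p+1)/2}$, which equals $1$ because $\lambda^{(p+1)/2} = \qr{\lambda}{p}\lambda$ in $\F_p^\times \subseteq \F_{p^2}^\times$.

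The main step is to show the reverse inclusion $\ker(\alpha) \subseteq K$. Let $g = (A,\lambda) \in \tilde{G}$ with $\alpha_g = \id$. From the commutative square~\eqref{eq:GcompareAut}, $\bar{\alpha_g}$ is the identity on $\P^1$, hence the image of $A$ in $\PGL_2(\F_p)$ is trivial, i.e.\ $A = \mu \I$ for some $\mu \in \F_p^\times$. The determinant condition forces $\lambda^2 = \mu^2$, so $\lambda = \pm\mu$. Inserting $A = \mu \I$ into the formula for $\alpha_g^\ast(y)$ gives
\[
\alpha_g^\ast(y) = \frac{\lambda \cdot y}{\mu^{(p+1)/2}} = \frac{\lambda}{\qr{\mu}{p}\mu}\cdot y,
\]
and setting this equal to $y$ yields $\lambda = \qr{\mu}{p}\mu$. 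Thus $g \in K$.

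Having identified $\ker(\alpha) = K$, the first isomorphism theorem yields an injective homomorphism $\tilde{G}/K \inj \Aut_k(C_k)$. Surjectivity of $\alpha$ was already established in Proposition~\ref{prop:allAutdefinedFpp}, which completes the proof. No real obstacle arises here: the whole argument reduces to the explicit calculation of the kernel on the $x$- and $y$-coordinates of $\RoqFpp$, combined with the exact sequences already in place.
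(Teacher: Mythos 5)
Your proof is correct and tracks the paper's argument closely: the paper observes that all $(\lambda\I,\qr{\lambda}{p}\lambda)$ lie in $\ker(\alpha)$ and then refers to "a diagram chase with \eqref{eq:GcompareAut}," which is exactly what you carry out explicitly---using the left column of the diagram to force $A=\mu\I$ and then reading off $\lambda=\qr{\mu}{p}\mu$ from the action on $y$. The only mild difference is that the paper's cryptic diagram chase could equally be read as a cardinality argument ($|\ker\alpha| = |\tilde G|/|\Aut_k(C_k)| = p-1 = |K|$), whereas you identify the kernel by direct computation; both are routine and equivalent.
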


It follows that the Roquette curve $C$ has $2p(p^2-1)$ many automorphisms, see \cite[Section~4]{Roq70}. The main result of \textit{loc.\ cit.}\ shows that among all curves with $p > g+1$ the Roquette curve is the only curve violating the Hurwitz bound $84(g-1)$ for the order of the automorphism group.

\subsection{Basic representation theory of \texorpdfstring{$G$}{G}} 
We denote by $N$ the image in $G$ of the group of upper triangular unipotent matrices
\[
N = \im\big( \{\begin{pmatrix} 1 & u \\ 0 & 1\end{pmatrix} \ ; \ u \in \F_p\} \inj \SL_2(\F_p) \xhookrightarrow{A \mapsto (A,1)} \tilde{G} \surj G\big).
\]
The group $N$ is cyclic of order $p$ and thus a $p$-Sylow of $G$. 

\begin{lem} 
\label{lem:pelementsallconjugateinG}
All elements of order $p$ in $G$ are conjugate to each other.
\end{lem}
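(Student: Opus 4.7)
The plan is to reduce the statement to the analogous (and standard) fact about $\PGL_2(\F_p)$ via the exact sequence
\[
1 \to \langle \iota \rangle \to G \xrightarrow{f \mapsto \bar f} \PGL_2(\F_p) \to 1
\]
coming from Proposition~\ref{prop:allAutdefinedFpp}, and then to promote a conjugation in $\PGL_2(\F_p)$ to one in $G$.

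First, I would observe that since $|\langle \iota \rangle| = 2$ is coprime to $p$, the quotient map $G \to \PGL_2(\F_p)$ identifies the $p$-Sylow $N \subset G$ with a $p$-Sylow of $\PGL_2(\F_p)$, and in particular sends elements of order $p$ to elements of order $p$. Next, I would verify that any two elements of order $p$ in $\PGL_2(\F_p)$ are conjugate: all $p$-Sylows are $\PGL_2(\F_p)$-conjugate by Sylow, and within the $p$-Sylow of upper triangular unipotents, conjugation by a diagonal torus element $\diag(a,1)$ scales the upper entry $u$ to $au$, so any two nontrivial unipotents in $\PGL_2(\F_p)$ are conjugate.

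Now let $g_1, g_2 \in G$ have order $p$ and let $\bar g_1, \bar g_2 \in \PGL_2(\F_p)$ be their images, so $\bar g_1, \bar g_2$ are nontrivial of order $p$. By the previous paragraph there is $\bar h \in \PGL_2(\F_p)$ with $\bar h \bar g_1 \bar h^{-1} = \bar g_2$. Pick any lift $h \in G$ of $\bar h$. Then $h g_1 h^{-1}$ and $g_2$ have the same image in $\PGL_2(\F_p)$, so
\[
h g_1 h^{-1} = \iota^{\varepsilon} g_2 \qquad \text{for some } \varepsilon \in \{0,1\}.
\]
The element $\iota$ is central in $G$, because it acts as $-1$ on the Jacobian and so commutes with every automorphism of $C$; equivalently, $(\I,-1)$ is central in $\tilde G$. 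Hence $\iota g_2$ has order $\operatorname{lcm}(2,p) = 2p$, whereas $hg_1 h^{-1}$ has order $p$. This forces $\varepsilon = 0$, i.e.\ $h g_1 h^{-1} = g_2$.

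The only step that requires any care is the last one, namely ruling out the case $\varepsilon = 1$; but this follows at once from the centrality of $\iota$ together with the fact that $p$ is odd.  The rest is a routine Sylow/lifting argument.
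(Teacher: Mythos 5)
Your proof is correct, but it takes a genuinely different route from the paper's. The paper lifts the problem \emph{upward}: it recalls the explicit conjugation $\matzz{m}{0}{0}{1}\matzz{1}{1}{0}{1}\matzz{m^{-1}}{0}{0}{1}=\matzz{1}{m}{0}{1}$ in $\GL_2(\F_p)$, reduces via Sylow to two elements $(s,1),(t,1)\in N$, chooses $A\in\GL_2(\F_p)$ with $AsA^{-1}=t$, and then observes that picking \emph{any} square root $\lambda$ of $\det(A)$ yields an element $(A,\lambda)\in\tilde G$ whose image in $G$ conjugates one to the other, since the $\lambda$-component is irrelevant for conjugating elements of the form $(s,1)$. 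You instead project \emph{downward}: you use the exact sequence $1\to\langle\iota\rangle\to G\to\PGL_2(\F_p)\to 1$, solve the conjugation problem in $\PGL_2(\F_p)$, lift the conjugator arbitrarily, and then eliminate the $\iota$-ambiguity by the order argument ($\iota$ central, $p$ odd, so $\iota g_2$ has order $2p\neq p$). Both arguments are short and correct; the paper's avoids any appeal to the exactness of the bottom row of \eqref{eq:GcompareAut} (which is established in Proposition~\ref{prop:allAutdefinedFpp}) by working entirely inside the explicitly constructed $\tilde G$, while yours is more conceptual and would adapt unchanged to any central extension of $\PGL_2(\F_p)$ by a group of order prime to $p$.
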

\begin{proof} 
The computation in $\GL_2(\F_p)$ 
\[
\matzz{m}{0}{0}{1} \matzz{1}{1}{0}{1} \matzz{m^{-1}}{0}{0}{1} = \matzz{1}{m}{0}{1} 
\]
shows that in $\GL_2(\F_p)$ all elements of order $p$ are conjugate to each other. Indeed, any element of order $p$ is conjugate to an element of the upper triangular unipotent matrices by  Sylow's theorems, and the computation explains the rest. 

The same holds in $G$ although $\GL_2(\F_p)$ is not a subgroup of $G$. Again by  Sylow's theorems  we only have to prove the lemma for nontrivial $(s,1),(t,1) \in N$. Then, from the $\GL_2$-result, we know that there is a matrix $A \in \GL_2(\F_p)$ with $AsA^{-1} = t$. Now we choose a square root $\lambda$ of $\det(A)$. The element $g \in G$ which is the image of $(A,\lambda) \in \tilde{G}$ does the job:
\[
g(s,1)g^{-1} = (A,\lambda)(s, 1)(A^{-1},\lambda^{-1}) = (AsA^{-1},1) = (t,1). \qedhere
\]
\end{proof}

Let  $K$ be a field of characteristic $0$. We consider a representation $\rho : G \to \GL(V)$ with a finite dimensonal $K$ vector space $V$. For simplicity we assume that $K$ contains the $p$-th roots of unity. The restriction $V|_N$ to $N$ decomposes into a direct sum of $1$-dimensional representations, according to the $K$-valued characters  
$\psi : N \to K^\times$ on $N$.

\begin{prop} \label{prop:samemultiplicity}
In the situation above, the multiplicity of $\psi$ occuring in $(V,\rho)$ is the same for all non-trivial characters $\psi$.
\end{prop}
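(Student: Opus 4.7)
The plan is to leverage Lemma~\ref{lem:pelementsallconjugateinG} to exhibit enough elements in the normalizer $N_G(N)$ to permute the non-trivial characters of $N$ transitively, and then to observe that this permutation is realized geometrically by $\rho(g)$ permuting the corresponding isotypic components of $V|_N$.

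\textbf{Step 1.} First I would show that the conjugation homomorphism $N_G(N) \to \Aut(N) = \F_p^\times$ is surjective. Fix a generator $n_0 \in N$. Given $a \in \F_p^\times$, the element $n_0^a$ again has order $p$, so by Lemma~\ref{lem:pelementsallconjugateinG} there exists $g \in G$ with $g n_0 g^{-1} = n_0^a$. Since $N = \langle n_0 \rangle = \langle n_0^a \rangle$ has prime order, $g$ normalizes $N$, and the induced automorphism of $N$ is $n \mapsto n^a$.

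\textbf{Step 2.} Decompose $V|_N = \bigoplus_{\psi} V_\psi$ into $N$-isotypic components indexed by characters $\psi : N \to K^\times$ (using that $K$ contains the $p$-th roots of unity). For $g \in N_G(N)$ with conjugation action $c_g : N \to N$ and $v \in V_\psi$, one checks
\[
\rho(n) \rho(g) v \;=\; \rho(g) \rho(c_g^{-1}(n)) v \;=\; \psi(c_g^{-1}(n)) \cdot \rho(g) v,
\]
so $\rho(g)$ restricts to an isomorphism $V_\psi \xrightarrow{\sim} V_{\psi \circ c_g^{-1}}$.

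\textbf{Step 3.} Combining the previous two steps, the permutation action of $N_G(N)/N$ on the set of characters of $N$ factors through $\Aut(N) = \F_p^\times$ and, by Step~1, realizes the full action of $\F_p^\times$ on $\Hom(N, K^\times)$. This action fixes the trivial character and is transitive on the $p-1$ non-trivial ones. Hence $\dim_K V_\psi$ takes a common value as $\psi$ ranges over the non-trivial characters, which is the stated common multiplicity. The proof is essentially a bookkeeping exercise in Clifford theory once Lemma~\ref{lem:pelementsallconjugateinG} is in hand; I do not anticipate a serious obstacle beyond getting the direction of the character twist correct.
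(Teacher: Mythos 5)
Your proof is correct, but the route through Clifford theory is different from the paper's. The paper applies Lemma~\ref{lem:pelementsallconjugateinG} more directly: since all nontrivial elements of $N$ are conjugate in $G$, the character $\chi$ of $\rho$ is constant on $N \setminus \{1\}$, say with value $n_\chi$, and then the multiplicity of any nontrivial $\psi$ is computed in one line as $\langle \res_N(\chi),\psi\rangle_N = \frac{1}{|N|}(\chi(1) - n_\chi)$, manifestly independent of $\psi$. You instead extract from the same lemma the surjectivity of $N_G(N) \to \Aut(N) \cong \F_p^\times$ (where a small but important point, which you correctly note, is that $N$ has prime order so any $g$ conjugating one nontrivial element of $N$ into $N$ automatically normalizes $N$) and then show $\rho(g)$ carries $V_\psi$ isomorphically to $V_{\psi\circ c_g^{-1}}$, giving transitivity on isotypic pieces. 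Both are sound; the paper's argument is shorter and purely numerical, whereas yours produces actual isomorphisms between the isotypic components, which is more structural information than the proposition asks for. Either works equally well as input to Proposition~\ref{prop:irreducibleGrep}.
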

\begin{proof} 
Let $\chi$ be the character of $\rho$ as a representation of $G$. By Lemma~\ref{lem:pelementsallconjugateinG} the value of $\chi$ on $N \setminus\{1\}$ is constant, say $\chi(s) = n_\chi$. The multiplicity of $\psi$ in $V|_N$ is computed as
\begin{align*}
\langle \res_{N}(\chi), \psi \rangle_N & = \frac{1}{|N|} \cdot \sum_{s \in N} \chi(s) \psi(s^{-1})  = \frac{1}{|N|} (\chi(1) - n_\chi) + n_\chi  \frac{1}{|N|} \cdot \sum_{s \in N}  \psi(s^{-1})  \\
& = \frac{1}{|N|} (\chi(1) - n_\chi) + n_\chi \langle \one, \psi \rangle_N = \frac{1}{|N|} (\chi(1) - n_\chi).
\end{align*}
Here $\one$ is the trivial representation, and the vanishing of $\langle \one, \psi \rangle_N$ follows from the orthogonality relations since $\psi$ is non-trivial, or even from more elementary facts on characters.
\end{proof}

\begin{prop}
\label{prop:irreducibleGrep}
Let $(V,\rho)$ be a representation of $G$ such that the restriction $V|_N$ is not the trivial representation. 
Then  $\dim_K(V) \geq (p-1)$, and if equality occurs, then $\rho$ is an absolutely irreducible representation.
\end{prop}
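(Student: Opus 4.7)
The plan is to extract the decomposition of $V|_N$ directly from Proposition~\ref{prop:samemultiplicity} and then use a multiplicity-counting argument.

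First, enlarge $K$ if necessary so that it contains the $p$-th roots of unity; this does not affect the statement. Since $N$ is cyclic of order $p$ and $\operatorname{char}(K)=0$, $V|_N$ decomposes as a direct sum of $K$-valued characters of $N$. Write
\[
V|_N \ = \ m_0 \cdot \one \ \oplus \ \bigoplus_{\psi\neq \one} m_\psi \cdot \psi,
\]
where the sum is over the $p-1$ nontrivial characters $\psi$. Proposition~\ref{prop:samemultiplicity} tells us that the $m_\psi$ for $\psi\neq \one$ all coincide with a single integer $m\ge 0$. The hypothesis that $V|_N$ is not the trivial representation means exactly that some nontrivial character occurs, so $m\ge 1$. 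This already gives
\[
\dim_K V \ =\ m_0 + m(p-1) \ \ge\ p-1,
\]
proving the first claim.

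Now suppose equality $\dim_K V = p-1$ holds. Then $m_0=0$ and $m=1$, so $V|_N$ is precisely the sum of all nontrivial characters of $N$, each occurring once, with no trivial summand. To establish absolute irreducibility, extend scalars to an algebraic closure $\bar K$ and assume, for contradiction, a nontrivial $\bar K[G]$-decomposition $V\otimes_K\bar K = W_1\oplus W_2$. Proposition~\ref{prop:samemultiplicity} applies to each $W_i$ (since each is a $G$-subrepresentation), so in the decomposition of $W_i|_N$ all nontrivial characters occur with a common multiplicity $m_i\ge 0$, and the trivial character occurs with some multiplicity $m_{0,i}\ge 0$. Summing gives
\[
m_{0,1}+m_{0,2} \ = \ m_0 \ = \ 0, \qquad m_1+m_2 \ =\ m \ =\ 1.
\]
The first forces $m_{0,i}=0$ for both $i$; the second forces one of the $m_i$, say $m_2$, to be zero, whence $W_2|_N = 0$ and therefore $W_2=0$. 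This contradicts the nontriviality of the decomposition, so $V\otimes_K\bar K$ is irreducible and $\rho$ is absolutely irreducible.

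There is no real obstacle here: once Proposition~\ref{prop:samemultiplicity} is available, the entire argument reduces to the bookkeeping above. The only point to be slightly careful about is that absolute irreducibility is the statement after extension of scalars, which is why the bound and Proposition~\ref{prop:samemultiplicity} must be applied to arbitrary $G$-subrepresentations over $\bar K$ rather than only over $K$.
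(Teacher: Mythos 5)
Your argument is correct and follows essentially the same route as the paper: both deduce the dimension bound from Proposition~\ref{prop:samemultiplicity} and then, for equality, apply that same multiplicity count to a subrepresentation (the paper picks an irreducible $W\subseteq V$ containing a nontrivial $N$-character and forces $W=V$; you assume a two-fold decomposition and derive a contradiction, which is the contrapositive of the same step). The extra bookkeeping with $m_0$ and the $m_{0,i}$ is a fine additional observation but is not needed beyond what the dimension bound already gives.
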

\begin{proof}
The assumption $V|_N$ non-trivial means that there is a non-trivial character $\psi$ of $N$ that occurs on $V|_N$. There are $(p-1)$ non-trivial characters of $N$, and each occurs in $V|_N$ with the same multiplicity according to Proposition~\ref{prop:samemultiplicity}. The dimension estimate follows at once. 

We can apply the same reasoning to an irreducible subrepresentation $W \subseteq V$, and we may choose one  which contains a nontrivial character $\psi$ of $N$. The dimension estimate in case of $\dim_K(V) = (p-1)$ shows $V=W$, hence $V$ itself is irreducible. The same argument applies after scalar extension to  an algebraic closed field, hence the representation is even absolutely irreducible.
\end{proof}

\begin{rmk}
Proposition~\ref{prop:irreducibleGrep} applies in particular to a faithful $G$-representation.
\end{rmk}

 
\end{document}